\documentclass[10pt,reqno]{amsart}

\usepackage{amssymb,latexsym,amsmath}

\def\sup{\mathop{\rm sup}}
\def\Re{\mathop{\rm Re}}
\def\Dom{\mathop{\rm Dom}}

\newtheorem{theorem}{Theorem}[section]
\newtheorem{lemma}[theorem]{Lemma}
\newtheorem{corollary}[theorem]{Corollary}
\newtheorem{remark}[theorem]{Remark}
\oddsidemargin=1.25cm
\evensidemargin=1.25cm
\textwidth=14cm

\begin{document}

\title[]{Stability of solutions to some abstract evolution equations with delay}

\author{N. S. Hoang}

\address{Mathematics Department, University of West Georgia,
Carrollton, GA 30116, USA}
\email{nhoang@westga.edu}

\author{A. G. Ramm}

\address{Mathematics Department, Kansas State University,
Manhattan, KS 66506, USA}
\email{ramm@ksu.edu}

\subjclass[2000]{34G20, 37L05, 44J05, 47J35.}

\date{}

\keywords{Abstract evolution problems; Delay; Stability; Differential inequality; Global existence}

\begin{abstract}
The global existence and stability of the solution to the delay differential equation (*)$\dot{u} = A(t)u + G(t,u(t-\tau)) + f(t)$, $t\ge 0$, $u(t) = v(t)$, $-\tau \le t\le 0$, are studied. 
Here $A(t):\mathcal{H}\to \mathcal{H}$ is a closed, densely defined,  linear operator in a Hilbert space $\mathcal{H}$ and 
$G(t,u)$ is a nonlinear operator in $\mathcal{H}$ continuous with respect to $u$ and $t$. 
We assume that the spectrum of $A(t)$ lies in the half-plane 
$\Re \lambda \le \gamma(t)$, where $\gamma(t)$ is not necessarily negative and $\|G(t,u)\| \le \alpha(t)\|u\|^p$, $p>1$, $t\ge 0$. 
Sufficient conditions for the solution to the equation to exist globally, to be bounded and to converge to zero as $t$ tends to $\infty$, under the non-classical assumption that $\gamma(t)$ can take positive values, are proposed and justified. 
\end{abstract}
\maketitle

\pagestyle{plain}

\section{Introduction}

Consider the following delay differential equation
\begin{subequations}
\label{eq1}
\begin{align}
\label{eq1a}
\dot{u} &= A(t)u + G(t,u(t-\tau)) + f(t),\qquad t\ge 0,\qquad \dot{u} := \frac{d u}{dt},\\
\label{eq1b}
u(t) &= v(t),\qquad -\tau\le t\le 0,\qquad \tau=const>0,\qquad v(t)\in C([-\tau,0];\mathcal{H}).
\end{align}
\end{subequations}
Reference to equation \eqref{eq1} means reference to both equations \eqref{eq1a} and \eqref{eq1b}.
Here, $A(t):\mathcal{H}\to \mathcal{H}$ is a closed, densely defined, linear operator in a Hilbert space $\mathcal{H}$ for any fixed $t\ge 0$,
\begin{equation}
\label{eq2}
\Re\langle u, A(t)u\rangle \le \gamma(t)\|u\|^2,\qquad u\in \Dom(A)\subset \mathcal{H},
\end{equation}
$G(t,u)$ is a nonlinear operator in $\mathcal{H}$ for any fixed $t\ge 0$,
\begin{equation}
\label{eq3}
\|G(t,u)\| \le \alpha(t)\| u\|^p,\qquad p>1,\qquad u\in \mathcal{H},
\end{equation}
and $f(t)$ is a function on $\mathbb{R}_+= [0,\infty)$ with values in $\mathcal{H}$,
\begin{equation}
\label{eq4}
\|f(t)\| \le \beta(t),\qquad t\ge 0.
\end{equation}
Here, $\langle \cdot,\cdot\rangle$ and $\|\cdot\|$ denote the inner product and the norm in $\mathcal{H}$, respectively. The functions $\gamma(t)$ and $\alpha(t)$ are continuous on $[0,\infty)$ and real-valued.

Functional differential equations have been studied extensively in the literature (see, e.g., \cite{ref1}--\cite{ref4} and references therein). 
The usual assumption to derive the global existence and the stability of the solution to equation \eqref{eq1} is: $\gamma(t)\le\gamma_0<0$, $\forall t\ge 0$. 
If $A(t)$ is a square matrix, then the condition $\gamma(t)\le\gamma_0<0$ implies that 
all the eigenvalues of $A(t)$ lie in the half-plane $\Re \lambda \le \gamma_0<0$. 
In \cite{ref5}, \cite{ref6}, and \cite{R4}, stability of solution to abstract differential equation \eqref{eq1} when $\tau =0$, i.e., without delay, was studied for the cases $0<\gamma(t)\searrow 0$ and $0>\gamma(t)\nearrow 0$. The main tool for the study of the stability in \cite{ref5}, \cite{ref6}, and \cite{R4} under these non-classical assumptions is some nonlinear inequalities. 
These inequalities were also used in the study of the Dynamical Systems Method (DSM) for solving operator equations in \cite{ref7}. 
In \cite{hoang14} the stability of the solution to equation \eqref{eq1} with $\tau=0$, i.e., without delay, was studied for the case when $\gamma(t)$ can take positive and negative values.
In \cite{hoang14} the nonlinear inequalities were not used, in contrast to \cite{ref5}, \cite{ref6}, and \cite{R4}. 
Using a nonlinear inequality with delay, the global existence and stability of equation \eqref{eq1} were studied in \cite{R5} for the case when $f(t) = 0$ and $G(t,u)$ is of the form $B(t)F(t,u)$ under the non-classical assumption that \eqref{eq2} holds but the inequality $\gamma(t) \le \gamma_0<0$ does not hold for any $\gamma_0<0$. 
In this paper, we are interested in having the stability results for the solution to equation \eqref{eq1} without using nonlinear inequalities similar to those used in \cite{R5}.

A common approach to obtain the global existence of the solution to equation 
\eqref{eq1} is to estimate $\|u(t)\|$ for $t\ge 0$ and use the local existence of the solution to extend the existence of the solution to $[0,\infty)$. An estimate of $\|u(t)\|$ for $t\ge 0$ can be derived from a nonlinear inequality 
(see inequality \eqref{eq6} below) which is obtained from equation \eqref{eq1} as follows. 
Take the inner product of both sides of equation \eqref{eq1a} with $u$ to get
\begin{equation}
\label{eq5.0}
\begin{split}
\langle u(t), \dot{u}(t)\rangle  &= \langle u(t), A(t)u(t)  +  G(t,u(t - \tau)) + f(t)\rangle\\
& = \langle u(t), A(t)u(t) \rangle  + \langle u(t), G(t,u(t - \tau)) \rangle + \langle u(t), f(t)\rangle,
\qquad t\ge 0. 
\end{split}
\end{equation}
Denote $g(t) := \|u(t)\|$, take the real part of equation \eqref{eq5.0} and use the triangle inequality, the Cauchy--Schwarz inequality, and inequalities \eqref{eq2}--\eqref{eq4} to get
\begin{equation}
\label{eq5}
\dot{g}(t)g(t) \le \gamma(t)g^2(t) + \alpha(t)g(t)g^p(t-\tau) + g(t)\beta(t),\qquad t\ge 0.
\end{equation}
The derivative $\dot{g}(t)$ in \eqref{eq5} is understood as the right derivative at $t$ if $g(t)=0$. 
From inequality \eqref{eq5} and equation \eqref{eq1b}  one gets 
\begin{subequations}
\label{eq6}
\begin{align}
\label{eq6a}
\dot{g}(t) &\le \gamma(t)g(t) + \alpha(t)g^{p}(t-\tau) + \beta(t),\qquad t\ge 0,\\
g(t) &= w(t):= \|v(t)\|,\qquad -\tau\le t\le 0.
\end{align}
\end{subequations}
If $g(t)>0$, $\forall t\ge 0$, then it is clear that \eqref{eq6a} follows from \eqref{eq5}. If $g(t) = 0$ for some $t>0$, then
these $t$ form a set of isolated points by the uniqueness theorem
and therefore equations
 \eqref{eq5} and \eqref{eq6a} are equivalent. This equivalence is proved differently in Lemma \ref{lemma4} below. 
By studying the global existence and boundedness of a solution $g(t)$ to inequality \eqref{eq6} and using the local existence of the solution $u(t)$ to equation \eqref{eq1}, one can obtain the global existence and boundedness of $u(t)$.

In \cite{R5} the stability of the solution to the following differential equation with delay was studied:
\begin{subequations}
\label{eqx1}
\begin{align}
\dot{u} &= A(t)u + B(t)F(t,u(t-\tau)),\qquad t\ge 0,\qquad \dot{u} := \frac{d u}{dt},\\
u(t)&= v(t),\qquad -\tau\le t\le 0,\qquad \tau=const>0.
\end{align}
\end{subequations}
It was assumed in \cite{R5} that $A(t)$ and $B(t)$ are linear operators in a Hilbert space $\mathcal{H}$ and $F(t,u)$ is a nonlinear operator in $\mathcal{H}$ for any fixed $t\ge 0$. It was also assumed in \cite{R5} that relation \eqref{eq2} holds, $\|B(t)\| \le b(t)$, and $\|F(t,u)\|\le \alpha(t,\|u\|)$.  
It is clear that equation \eqref{eqx1} is a special case of equation \eqref{eq1} when the function $f(t)$ vanishes and $G(t,u) = B(t)F(t,u)$. 
Using the assumptions on $A(t)$, $B(t)$, and $F(t,u)$ and an assumption on the existence of a local solution to problem \eqref{eqx1}, the following result was proved in \cite{R5}: 

{\em If there exists a function $\mu(t)>0$, defined for all $t\ge -\tau$, such that
\begin{subequations}
\label{eqx2}
\begin{align}
b(t)\alpha\bigg(t,\frac{1}{\mu(t-\tau)}\bigg)\mu(t) &\le -\gamma(t) - \frac{\dot{\mu}(t)}{\mu(t)},\qquad t\ge 0,\\
\|u(t)\| &\le \frac{1}{\mu(t)},\qquad t\in [-\tau,0],
\end{align}
\end{subequations}
then the solution to \eqref{eqx1} exists for all $t\ge 0$ and
$$
\|u(t)\| \le \frac{1}{\mu(t)},\qquad t\ge 0.
$$}
Using this result the global existence and the boundedness of the solution to equation 
\eqref{eqx1} were obtained for some classes of functions $b(t), \gamma(t)$, and $\alpha(t,y)$ (see \cite{R5}).

Although the mentioned result in \cite{R5} is quite general, it requires to find a function $\mu(t)$ which solves inequality \eqref{eqx2}. In general, it is not easy to find such function $\mu(t)$ if the functions $b(t), \gamma(t)$, and $\alpha(t,y)$ are not simple. 

In this paper we are interested in having stability results for equation \eqref{eq1} under non-classical assumptions on the operator $A(t)$, namely that the function $\gamma(t)$ in \eqref{eq2} may  change signs.  
In particular, we want find  
sufficient conditions on the functions $\gamma(t)$, $\alpha(t)$, and $\beta(t)$ (see \eqref{eq2}--\eqref{eq4}) which yield the global existence and the boundedness of the solution to equation \eqref{eq1}.  

The main results of this paper are Theorem \ref{thm1}, Theorem \ref{thm2}, and Corollary \ref{corollary1}. 
In Theorems \ref{thm1} and \ref{thm2} sufficient conditions on $\gamma(t)$, $\alpha(t)$, and $\beta(t)$ for the solution of equation \eqref{eq1} to exist globally, to be bounded, and to decay to zero as $t\to \infty$, are given. A direct consequence of Theorem \ref{thm1} for the case $f=0$ is formulated in Corollary \ref{corollary1}. The novelties of our results compared to those in \cite{R5} include: 

{\em Our results do not require one to find the function $\mu(t)>0$ which solves the nonlinear inequality \eqref{eqx2}.}

 Thus, these results are applicable when the functions $\gamma(t)$, $\alpha(t)$, and $\beta(t)$ are quite general. Moreover, our results cover the case when $f\not= 0$ which was not considered in \cite{R5}. 

Throughout this paper, we suppose that the following assumption holds:

{\bf Assumption A:}

\noindent{\it The equation
\begin{subequations}
\label{eq7}
\begin{align}
\dot{u} &= A(t)u + G(t,u(t-\tau)) + f(t),\qquad t\ge a,\qquad 
\dot{u} := \frac{d u}{dt},\\
u(t) &= v_{a}(t),\qquad a-\tau\le t\le a,\qquad 
v_a \in C([a-\tau,a];\mathcal{H}),
\end{align}
\end{subequations}
has a unique local solution for all $a\ge 0$ and $v_a \in C([a-\tau,a];\mathcal{H})$.}

It is known that Assumption A holds if $A(t)$ is a generator of a $C_0$ semigroup and the functions $\alpha(t)$ and $\beta(t)$ are 
continuous and bounded on $[0,\infty)$ (see, e.g., \cite{ref1}).

\section{Main results}

Let us first show that inequalities \eqref{eq5} and \eqref{eq6}  are equivalent. 
\begin{lemma}
\label{lemma4}
Let $g(t)\ge 0$ be a solution to
\begin{subequations}
\begin{align}
\label{eq71}
g(t)\dot{g}(t) &\le \gamma(t)g^2(t) + \alpha(t)g(t)g^p(t-\tau) + g(t)\beta(t),\qquad t\ge 0,\\
g(t) &= w(t)\ge 0,\qquad -\tau \le t\le 0.
\end{align}
\end{subequations}
Then $g(t)$ solves the inequality
\begin{subequations}
\begin{align}
\label{eq72}
\dot{g}(t) &\le \gamma(t)g(t) + \alpha(t)g^p(t-\tau) + \beta(t),\qquad t\ge 0,\\
g(t) &= w(t),\qquad -\tau \le t\le 0.
\end{align}
\end{subequations}
\end{lemma}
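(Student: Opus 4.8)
The passage from \eqref{eq71} to \eqref{eq72} is immediate at every point where $g(t)>0$: there one simply divides \eqref{eq71} by $g(t)$. The whole difficulty is concentrated at the points where $g(t)=0$, where this division is illegitimate and where $\dot g$ is only a right derivative. The plan is to bypass the division entirely by a regularization. For $\varepsilon>0$ I would set $g_\varepsilon(t):=\sqrt{g^2(t)+\varepsilon^2}$. Since $g$ is continuous and, being a solution of \eqref{eq71}, locally absolutely continuous, and since $x\mapsto\sqrt{x^2+\varepsilon^2}$ is $C^1$ with derivative bounded by $1$, the function $g_\varepsilon$ is locally absolutely continuous with $\dot g_\varepsilon=\dfrac{g\dot g}{g_\varepsilon}$ for almost every $t$. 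Moreover $g_\varepsilon>0$ everywhere and $g\le g_\varepsilon$, whence the two elementary bounds $\dfrac{g}{g_\varepsilon}\le 1$ and $\dfrac{g^2}{g_\varepsilon}\le g$ hold (the latter because $g_\varepsilon\ge g\ge0$).

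Next I would insert \eqref{eq71} into the formula for $\dot g_\varepsilon$. Using $g\dot g\le\gamma g^2+\alpha g\,g^p(t-\tau)+g\beta$ and dividing by $g_\varepsilon>0$ gives, for almost every $t\ge0$,
\begin{equation*}
\dot g_\varepsilon(t)\le\frac{\gamma(t)g^2(t)}{g_\varepsilon(t)}+\alpha(t)\frac{g(t)}{g_\varepsilon(t)}g^p(t-\tau)+\beta(t)\frac{g(t)}{g_\varepsilon(t)}.
\end{equation*}
Here I would use $\alpha(t)\ge0$, $\beta(t)\ge0$ (cf. \eqref{eq3}, \eqref{eq4}) together with $g^p(t-\tau)\ge0$ and $\dfrac{g}{g_\varepsilon}\le1$ to discard the last two denominators, obtaining $\dot g_\varepsilon\le\dfrac{\gamma g^2}{g_\varepsilon}+\alpha g^p(t-\tau)+\beta$ almost everywhere.

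I would then integrate this inequality over an arbitrary interval $[s,t]$ and let $\varepsilon\to0^+$. On the left, $g_\varepsilon(t)-g_\varepsilon(s)\to g(t)-g(s)$. On the right the last two terms are independent of $\varepsilon$, while $\dfrac{\gamma g^2}{g_\varepsilon}\to\gamma g$ pointwise and $\Big|\dfrac{\gamma g^2}{g_\varepsilon}\Big|\le|\gamma|g$, the majorant $|\gamma|g$ being continuous, hence integrable, on $[s,t]$; dominated convergence then yields
\begin{equation*}
g(t)-g(s)\le\int_s^t\big(\gamma(r)g(r)+\alpha(r)g^p(r-\tau)+\beta(r)\big)\,dr,\qquad 0\le s\le t.
\end{equation*}
Since the integrand is continuous, dividing by $t-s$ and letting $t\to s^+$ shows that the right derivative of $g$ at $s$ is at most $\gamma(s)g(s)+\alpha(s)g^p(s-\tau)+\beta(s)$, which is exactly \eqref{eq72} with $\dot g$ read as a right derivative; at points of genuine differentiability this is the ordinary inequality.

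The main obstacle is precisely the behaviour at the zeros of $g$, and the single place where care is genuinely needed is the limit of the term $\dfrac{\gamma g^2}{g_\varepsilon}$: because $\gamma$ may change sign, one cannot bound this term above by $\gamma g_\varepsilon$ or by $\gamma g$ before passing to the limit. The device that makes the argument uniform in the sign of $\gamma$ is to keep this term unestimated, pass to the integral form, and invoke the sign-free domination $|\gamma g^2/g_\varepsilon|\le|\gamma|g$ to justify the limit. The only hypotheses used are $g\ge0$, $\alpha,\beta\ge0$, and the regularity of $g$ as a solution of \eqref{eq71}.
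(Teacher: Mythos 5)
Your proof is correct, and it shares with the paper the basic device of regularizing $g$ as $\sqrt{g^2+(\text{$\epsilon$-term})}$ to make the division legitimate, but the two arguments diverge exactly at the point you identified as the crux: the sign-changing $\gamma$. The paper takes the time-dependent regularizer $g_\epsilon(t)=\big[g^2(t)+\epsilon e^{2\int_0^t\gamma(\xi)\,d\xi}\big]^{1/2}$ (see \eqref{eqz2}); since the added term solves $\dot\rho=2\gamma(t)\rho$, its derivative combines with $\gamma g^2$ to give exactly $\gamma g_\epsilon^2$, with no inequality and no case distinction on the sign of $\gamma$, so the pointwise differential inequality \eqref{eq75} holds for each $\epsilon>0$ and the paper then lets $\epsilon\to0$ directly in the derivative, asserting $\lim_{\epsilon\to 0}\dot g_\epsilon(t)=\dot g(t)$. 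Your constant regularizer $\sqrt{g^2+\varepsilon^2}$ has no such algebraic cancellation, so you keep $\gamma g^2/g_\varepsilon$ unestimated, pass to the integral form, and invoke dominated convergence with the sign-free majorant $|\gamma|g$, recovering \eqref{eq72} as a right-derivative (upper Dini) inequality by differentiating the integral inequality. What each buys: the paper's choice is shorter and stays entirely at the level of pointwise differential inequalities, but its final step --- the convergence $\dot g_\epsilon(t)\to\dot g(t)$, which is delicate precisely at the zeros of $g$ where $\dot g$ is only a right derivative --- is asserted without justification; your integral detour avoids any limit of derivatives and is in that respect the more watertight route, at the modest cost of assuming $g$ locally absolutely continuous, which is harmless here (in the paper $g(t)=\|u(t)\|$ with $u$ a strong solution, so $g$ is locally Lipschitz) and is in any case used implicitly by the paper when it differentiates $g^2$. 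The only loose thread is your appeal to the continuity of $\beta$ in the final differentiation step: the paper declares only $\gamma$ and $\alpha$ continuous, though it assumes continuity of $\beta$ elsewhere, and your step survives with $\beta$ merely locally integrable by arguing at Lebesgue points.
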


\begin{proof}
Equation \eqref{eq71} can be written as
\begin{equation}
\label{eqz1}
\frac{d}{dt}\, g^2(t) \le 2\gamma(t)g^2(t) + 2\alpha(t)g(t)g^p(t-\tau) + 2g(t)\beta(t),\qquad t\ge 0.
\end{equation}
Let 
\begin{equation}
\label{eqz2}
g_\epsilon(t) := \bigg[g^2(t) + \epsilon e^{2\int_0^t \gamma(\xi)\, d\xi} \bigg]^\frac{1}{2},\qquad t\ge -\tau,\qquad \epsilon=const>0. 
\end{equation}
From \eqref{eqz1} and \eqref{eqz2} one gets
\begin{equation}
\label{eqz3}
\begin{split}
\frac{d}{dt}\, g^2_\epsilon(t) &= \frac{d}{dt}\, g^2(t)  + 2\epsilon \gamma(t) e^{2\int_0^t \gamma(\xi)\, d\xi} \\
&\le 2\gamma(t)g^2(t) + 2\alpha(t)g(t)g^p(t-\tau) + 2g(t)\beta(t) + 2\epsilon \gamma(t) e^{2\int_0^t \gamma(\xi)\, d\xi} \\
&= 2\gamma(t)g^2_\epsilon(t) + 2\alpha(t)g(t)g^p(t-\tau) + 2g(t)\beta(t) \\
& \le 2\gamma(t)g^2_\epsilon(t) + 2\alpha(t)g_\epsilon(t)g^p_\epsilon(t-\tau) + 2g_\epsilon(t)\beta(t),\qquad t\ge 0. 
\end{split}
\end{equation}
Here the inequality $g_{\epsilon}(t)>g(t)$, $\forall t\ge -\tau$, $\epsilon>0$, was used. Inequality \eqref{eqz3} implies
\begin{equation}
\label{eq74}
g_\epsilon(t)\dot{g}_\epsilon(t) \le \gamma(t)g^2_\epsilon(t) + \alpha(t)g_\epsilon(t) g^p_\epsilon(t-\tau) + g_\epsilon(t)\beta(t),\qquad t\ge 0. 
\end{equation}
Since $g_\epsilon(t)\ge \sqrt{\epsilon}e^{\int_0^t \gamma(\xi)\, d\xi}>0$, $\forall t\ge 0$, $\epsilon>0$, by \eqref{eqz2}, it follows from inequality \eqref{eq74} that
\begin{equation}
\label{eq75}
\dot{g}_\epsilon(t) \le \gamma(t)g_\epsilon(t) + \alpha(t)g^p_\epsilon(t-\tau) + \beta(t),\qquad t\ge 0,\quad \epsilon > 0. 
\end{equation}
Since $g(t)=\lim_{\epsilon\to 0}g_\epsilon(t)$ by \eqref{eqz2},
and $\lim_{\epsilon\to 0}\dot{g}_\epsilon(t)=\dot{g}(t)$,
 inequality \eqref{eq72} follows from \eqref{eq75} by letting $\epsilon \to 0$. Lemma \ref{lemma4} is proved. 
\end{proof}

Consider the following delay differential equation
\begin{subequations}
\label{eq8}
\begin{align}
\label{eq8a}
\dot{h}(t) &= \gamma(t)h(t) + \alpha(t)h^{p}(t-\tau) + \beta(t),\qquad t\ge 0,\\
h(t) &= w(t),\qquad -\tau\le t\le 0.
\end{align}
\end{subequations}
We have the following comparison lemma. 

\begin{lemma}
\label{compare}
Let $g(t)$ solve inequality \eqref{eq6} and $h(t)$ solve equation \eqref{eq8}. Then
\begin{equation}
\label{eq9}
g(t) \le h(t),\qquad \forall t\in [0,\tilde{T}),
\end{equation}
where $[-\tau,\tilde{T})$ is the maximal interval of the existence of $h(t)$. 
\end{lemma}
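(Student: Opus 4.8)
The plan is to prove the comparison inequality \eqref{eq9} by showing that the difference $\phi(t):=h(t)-g(t)$ is nonnegative on $[0,\tilde T)$, using the \emph{method of steps}: on each interval of length $\tau$ the delayed term depends only on already-controlled past values, so that $\phi$ obeys a genuinely \emph{linear} differential inequality $\dot\phi\ge\gamma\phi$ that can be integrated explicitly via an integrating factor.

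First I would record the initial data: on $[-\tau,0]$ both $g$ and $h$ equal $w$, so $\phi\equiv 0$ there; in particular $\phi(0)=0$. I then argue inductively over the intervals $I_k:=[k\tau,(k+1)\tau]$, $k=0,1,2,\dots$, intersected with $[0,\tilde T)$. Assume $\phi\ge 0$ on $[(k-1)\tau,k\tau]$ (for $k=0$ this is the initial interval). For $t\in I_k$ the delayed argument $t-\tau$ lies in $[(k-1)\tau,k\tau]$, so the induction hypothesis together with $g,h\ge 0$ gives $h(t-\tau)\ge g(t-\tau)\ge 0$. Since $p>1$ and $x\mapsto x^p$ is nondecreasing on $[0,\infty)$, this yields $h^p(t-\tau)\ge g^p(t-\tau)$. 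Subtracting \eqref{eq6a} from \eqref{eq8a} and using $\alpha(t)\ge 0$ (which holds because $\alpha$ bounds the norm $\|G\|$ in \eqref{eq3}) gives, for $t\in I_k$,
\[
\dot\phi(t)\ge \gamma(t)\phi(t)+\alpha(t)\big(h^p(t-\tau)-g^p(t-\tau)\big)\ge \gamma(t)\phi(t).
\]

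Next I would integrate this linear differential inequality. Multiplying by the positive integrating factor $e^{-\int_0^t\gamma(\xi)\,d\xi}$ shows that $t\mapsto \phi(t)e^{-\int_0^t\gamma(\xi)\,d\xi}$ is nondecreasing on $I_k$, whence
\[
\phi(t)\ge \phi(k\tau)\,e^{\int_{k\tau}^t\gamma(\xi)\,d\xi},\qquad t\in I_k.
\]
By continuity of $g$ and $h$ the endpoint value $\phi(k\tau)$ is inherited from the previous interval and is $\ge 0$, so $\phi\ge 0$ throughout $I_k$. This closes the induction, and since every $t\in[0,\tilde T)$ lies in some $I_k$, we conclude $g(t)\le h(t)$ on $[0,\tilde T)$, which is \eqref{eq9}.

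The step I expect to require the most care is the integration of the differential inequality, because $g$ solves only the \emph{inequality} \eqref{eq6} and its derivative is interpreted as a right derivative where $g=0$. To make the integrating-factor argument rigorous one should note that $g=\|u(t)\|$ is locally Lipschitz (hence absolutely continuous), so $\phi$ is absolutely continuous and the relation $\frac{d}{dt}\big(\phi(t)e^{-\int_0^t\gamma(\xi)\,d\xi}\big)\ge 0$ may be integrated even though $\dot g$ exists only almost everywhere. A minor accompanying point is the matching of the values at the junctions $t=k\tau$, which is guaranteed by the continuity of $g$ and $h$.
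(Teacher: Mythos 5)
Your proof is correct, but it takes a genuinely different route from the paper's. The paper argues by perturbation: it introduces the solutions $h_n$ of \eqref{eq8} with $\beta(t)$ replaced by $\beta(t)+\frac1n$ and runs a first-crossing argument --- if $T$ were the first time $g$ touches $h_n$, then $\dot g(T)\ge \dot h_n(T)$ by the geometry of the crossing, while \eqref{eq6a}, \eqref{eqz4a}, $g(T)=h_n(T)$ and $g(T-\tau)\le h_n(T-\tau)$ give $\dot g(T)\le \dot h_n(T)-\frac1n$, a contradiction; the non-strict bound $g\le h$ on $[0,\tilde T)$ is then recovered by letting $n\to\infty$ (which tacitly uses $h_n\to h$ and $T_n\to\tilde T$, a continuous-dependence step). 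You instead exploit the delay structure directly via the method of steps: on $I_k=[k\tau,(k+1)\tau]$ the delayed arguments lie in the previous interval, where the ordering is already known, so the delayed terms compare and $\phi=h-g$ obeys the \emph{linear} inequality $\dot\phi\ge\gamma\phi$, integrated with the factor $e^{-\int_0^t\gamma(\xi)\,d\xi}$. Both arguments use the same two structural facts (nonnegativity of $\alpha$ and monotonicity of $x\mapsto x^p$ on $[0,\infty)$ applied to the ordered delayed values), but they pay different costs: your version avoids the $\frac1n$-perturbation and the limit passage entirely, and even yields the quantitative lower bound $\phi(t)\ge\phi(k\tau)e^{\int_{k\tau}^t\gamma(\xi)\,d\xi}$; in exchange it needs enough regularity of $g$ to integrate a differential inequality holding only a.e.\ (or with a right derivative at zeros of $g$), which you correctly supply via absolute continuity of $\|u(t)\|$ --- alternatively, the standard fact that a continuous function with nonnegative right Dini derivative is nondecreasing closes this without invoking absolute continuity. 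The paper's crossing argument, by contrast, only ever evaluates one-sided derivatives at the single time $T$, so it is insensitive to such regularity issues, at the price of the approximation and limit step.
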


\begin{proof}
Let $h_n(t)$ be the solution to the equation
\begin{subequations}
\label{eqz4}
\begin{align}
\label{eqz4a}
\dot{h}_n(t) &= \gamma(t)h_n(t) + \alpha(t)h_n^{p}(t-\tau) + \beta(t) + \frac{1}{n},\qquad t\ge 0,\qquad n=1,2,....,\\
\label{eqz4b}
h_n(t) &= w(t),\qquad -\tau\le t\le 0.
\end{align}
\end{subequations}
Since $g(t)=w(t)=h_n(t)$, $\forall t\in [-\tau,0]$, it follows from \eqref{eq6a} and \eqref{eqz4a} that
\begin{equation*}
\begin{split}
g'(0) & \le  \gamma(0)g(0) + \alpha(0)g^{p}(-\tau) + \beta(0)\\
&= \gamma(0)h_n(0) + \alpha(0)h_n^{p}(-\tau) + \beta(0) = h_n'(0) - \frac{1}{n} < h_n'(0). 
\end{split}
\end{equation*}
This and the equality $g(0)=h_n(0)$ imply the existence of $\delta>0$ such that $g(t)<h_n(t)$, $\forall t\in (0,\delta)$. Let $T>0$ be the largest value such that
$$
g(t) < h_n(t),\qquad \forall t\in (0,T).
$$
{\it We claim that $T=T_n$ where $[-\tau,T_n)$ is the maximal interval of the existence of $h_n(t)$.}

To prove this claim, assume the contrary. Then $T<T_n$ and $h_n(T) = g(T)$, by the continuity of $g(t)$ and $h_n(t)$ and the definition of $T$. Since $g(t) < h_n(t)$, $\forall t\in (0, T)$, and $g(T) = h_n(T)$, one concludes that 
\begin{equation}
\label{eqdh1}
\dot{g}(T)\ge \dot{h}_n(T). 
\end{equation}
On the other hand, from \eqref{eq6a}, \eqref{eqz4a}, the equality $g(T)=h_n(T)$, and the inequality $g(T-\tau) \le h(T-\tau)$, we have
\begin{equation*}
\begin{split}
\dot{g}(T) &\le \gamma(T)g(T) + \alpha(T)g^{p}(T-\tau) + \beta(T)\\
&= \gamma(T) h_n(T) + \alpha(T)h_n^{p}(T-\tau) + \beta(T) = \dot{h}_n(T)-\frac{1}{n}<\dot{h}(T). 
\end{split}
\end{equation*}
This contradicts to inequality \eqref{eqdh1} and implies that $T=T_n$, i.e.,
$$
g(t) < h_n(t),\qquad \forall t\in (0,T_n).
$$
Since $h(t) = \lim_{n\to\infty}h_n(t)$ and $\lim_{n\to\infty}T_n = \tilde{T}$, one gets $g(t)\le h(t)$, $\forall t\in [0,\tilde{T})$. Lemma \ref{compare} is proved. 
\end{proof}

Lemma \ref{compare}  says that for any solution $g(t)\ge 0$ to inequality \eqref{eq6} one has 
$0\le g(t)\le h(t)$. Thus, the global existence and boundedness of $h(t)$ imply the global existence and boundedness of $g(t)$. 
Therefore, to study the global existence and boundedness of $g(t)$, we will study the global existence and boundedness of the solution $h(t)$ to equation \eqref{eq8}.

Let us consider equation \eqref{eq8}. 
Since the functions $h(t)$, $\alpha(t)$, and $f(t)$ are nonnegative on $\mathbb{R}_+$, it follows from \eqref{eq8a} that
\begin{equation}
\label{eq10}
\dot{h}(t) \ge \gamma(t)h(t),\qquad t\ge 0.
\end{equation}
This inequality is equivalent to
$$
\frac{d}{dt}\bigg[h(t)e^{-\int_0^t \gamma(\xi)\, d\xi}\bigg] \ge 0,\qquad t\ge 0.
$$ 
Integrate this inequality from $t-\tau$ to $t$ to get
\begin{equation}
\label{eq11}
h(t)e^{-\int_0^{t} \gamma(\xi)\, d\xi} \ge h(t-\tau)e^{-\int_0^{t-\tau} \gamma(\xi)\, d\xi},\qquad t\ge \tau. 
\end{equation}
Thus,
\begin{equation}
\label{eq11'}
h(t)\sigma(t) \ge h(t-\tau),\qquad \sigma(t):= e^{-\int_{t-\tau}^{t} \gamma(\xi)\, d\xi},\qquad t\ge \tau.
\end{equation}
This and inequality \eqref{eq8a} imply
\begin{equation}
\label{eq12}
\dot{h}(t) \le \gamma(t)h(t) + \alpha(t)\sigma^p(t)h^{p}(t) + \beta(t),\qquad t\ge \tau.
\end{equation}

It follows from equation \eqref{eq8a} that
\begin{equation}
\label{eq13}
\frac{d}{dt}\bigg[ h(t)\nu(t)\bigg ] = \nu(t) \alpha(t) h^p (t-\tau) + \nu(t)\beta(t),\qquad t\ge 0,\qquad \nu(t) := e^{-\int_0^t \gamma(\xi)\, d\xi}.
\end{equation}
Integrate this equation from $0$ to $\tau$ to get
$$
h(\tau)\nu(\tau) - h(0)\nu(0) = \int_0^\tau \big[\alpha(\xi)\nu(\xi)h^p(\xi-\tau) + \beta(\xi)\nu(\xi)\big]\,d\xi.
$$
This and the relation $\nu(0) = 1$ imply
\begin{equation}
\label{eq14}
h(\tau) = \frac{h(0)}{\nu(\tau)} + \frac{\int_0^\tau \big[\alpha(\xi)\nu(\xi)\|v(\xi-\tau)\|^p + \beta(\xi)\nu(\xi)\big]\,d\xi}{\nu(\tau)}. 
\end{equation}

From \eqref{eq12} and \eqref{eq14} one concludes that the solution $h(t)$ to equation \eqref{eq8} satisfies the following inequality
\begin{subequations}
\label{eqz27}
\begin{align}
\label{eqz27a}
\dot{h}(t) &\le \gamma(t)h(t) + \alpha(t)\sigma^p(t)h^{p}(t) + \beta(t),\qquad t\ge \tau,\qquad h(t)\ge 0,\\
\label{eqz27b}
h(\tau) &= h_\tau
\end{align}
\end{subequations}
where
\begin{equation}
\label{eqz28}
h_\tau:=\frac{\|v(0)\|}{\nu(\tau)} + \frac{\int_0^\tau \big[\alpha(\xi)\nu(\xi)\|v(\xi-\tau)\|^p + \beta(\xi)\nu(\xi)\big]\,d\xi}{\nu(\tau)}. 
\end{equation}
By our assumptions \eqref{eq3} and \eqref{eq4} it follows that 
$\alpha(t)$ and $\beta(t)$ are positive. Also $\nu(t)>0$. Therefore $h_\tau>0$. The function $\sigma(t)$, defined in \eqref{eq11'}, is also positive.

The following lemma gives a sufficient condition for the solution to equation \eqref{eq8} to exist globally. 
\begin{lemma}
\label{lemma1}
Let $h(t)$ be a solution to equation \eqref{eq8}. 
Assume that
\begin{gather}
\label{eq29zx}
\frac{1}{\big[h_\tau\nu(\tau) + \omega\big]^{p-1}} > (p-1) \int_\tau^\infty \frac{\alpha(\xi) \sigma^p(\xi)}{\nu^{p-1}(\xi)}\, d\xi,\qquad \nu(t) := e^{-\int_0^t \gamma(\xi)\,d\xi},\\
\label{eq16}
\frac{\beta(t)\nu^p(t)}{\alpha(t)\sigma^p(t)} \le \omega^p,\qquad t\ge \tau,\qquad \omega = const>0,
\end{gather}
where $h_\tau$ is defined in \eqref{eqz28}. 
Then $h(t)$ exists globally and satisfies the estimate
\begin{equation}
\label{eq17}
h(t) \le \frac{\bigg(\frac{1}{\big[h_\tau\nu(\tau) + \omega\big]^{1-p} - (p-1)\int_\tau^t \frac{\alpha(\xi)\sigma^p(\xi)}{\nu^{p-1}(\xi)}\, d\xi}\bigg)^{\frac{1}{p-1}} - \omega}{\nu(t)},\qquad t\ge \tau. 
\end{equation}
\end{lemma}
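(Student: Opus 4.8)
The plan is to exploit the fact, already established in the derivation of \eqref{eqz27}, that the solution $h(t)$ of \eqref{eq8} satisfies on $[\tau,\infty)$ the \emph{closed} (non-delayed) differential inequality \eqref{eqz27a},
$$\dot h(t) \le \gamma(t)h(t) + \alpha(t)\sigma^p(t)h^p(t) + \beta(t), \qquad h(\tau) = h_\tau,$$
in which the delayed term $h^p(t-\tau)$ has already been absorbed into $\sigma^p(t)h^p(t)$ via \eqref{eq11'}. The whole argument then reduces to integrating this Bernoulli-type inequality and controlling the resulting denominator. Recall also that $h(t)\ge 0$ and that $\alpha,\beta,\nu,\sigma$ are positive, so all terms below have the signs I need.

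First I would remove the linear term by the integrating factor $\nu(t)=e^{-\int_0^t\gamma}$, setting $\psi(t):=h(t)\nu(t)$. Since $\dot\nu=-\gamma\nu$ and $h=\psi/\nu$, one gets $\dot\psi=\nu(\dot h-\gamma h)\le \nu\alpha\sigma^p h^p+\nu\beta=\frac{\alpha\sigma^p}{\nu^{p-1}}\psi^p+\nu\beta$. Next I would absorb the inhomogeneous term using hypothesis \eqref{eq16}, which is exactly the statement $\nu\beta\le\omega^p\frac{\alpha\sigma^p}{\nu^{p-1}}$, giving $\dot\psi\le\frac{\alpha\sigma^p}{\nu^{p-1}}(\psi^p+\omega^p)$.

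The key algebraic step is to pass to the shifted variable $\phi(t):=\psi(t)+\omega=h(t)\nu(t)+\omega$ and to invoke the elementary superadditivity inequality $\psi^p+\omega^p\le(\psi+\omega)^p=\phi^p$, valid for $p>1$ and nonnegative arguments (it follows from the monotonicity of $x\mapsto x^{p-1}$). Since $\phi'=\psi'$, this collapses the estimate into the clean separable form $\dot\phi\le a(t)\phi^p$ with $a(t):=\frac{\alpha(t)\sigma^p(t)}{\nu^{p-1}(t)}\ge 0$ and $\phi\ge\omega>0$. Dividing by $\phi^p$ and integrating from $\tau$ to $t$ yields
$$\phi^{1-p}(t)\ge\phi^{1-p}(\tau)-(p-1)\int_\tau^t a(\xi)\,d\xi,\qquad \phi(\tau)=h_\tau\nu(\tau)+\omega.$$
Hypothesis \eqref{eq29zx} is precisely what guarantees that the right-hand side stays strictly positive for all $t\ge\tau$, indeed bounded below by $\phi^{1-p}(\tau)-(p-1)\int_\tau^\infty a>0$. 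Raising to the negative power $1/(1-p)$ reverses the inequality, producing $\phi(t)\le\big(\phi^{1-p}(\tau)-(p-1)\int_\tau^t a\big)^{1/(1-p)}$, which after solving for $h=(\phi-\omega)/\nu$ is exactly the bound \eqref{eq17}.

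Finally, global existence follows from this a priori bound by the standard continuation argument: were $h$ defined only on a maximal interval $[\tau,\tilde T)$ with $\tilde T<\infty$, it would necessarily blow up at $\tilde T$, yet \eqref{eq17} keeps $h(t)$ finite there because the denominator remains bounded away from zero on all of $[\tau,\infty)$. The one step requiring genuine care, rather than routine computation, is the superadditivity reduction to $\phi^p$ together with the verification that the \emph{strict} inequality in \eqref{eq29zx} is exactly the sharp condition keeping that denominator positive \emph{uniformly} in $t$; the integrating-factor and separation-of-variables manipulations surrounding it are straightforward.
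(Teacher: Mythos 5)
Your proposal is correct and follows essentially the same route as the paper's proof: starting from \eqref{eqz27a}, absorbing $\beta(t)$ via \eqref{eq16}, applying the superadditivity inequality $a^p+b^p\le(a+b)^p$ to pass to $h(t)\nu(t)+\omega$, and integrating the resulting Bernoulli-type inequality, with \eqref{eq29zx} keeping the denominator positive. Your substitution $\psi=h\nu$, $\phi=\psi+\omega$ is only a notational repackaging of the paper's computation, and your explicit continuation argument for global existence is a welcome detail the paper leaves implicit.
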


\begin{remark}
\label{remark2.4}
It follows from inequality \eqref{eq29zx} that the right-hand side of \eqref{eq17} is well-defined for all $t\ge \tau$. 
Inequality \eqref{eq29zx} is equivalent to 
$$
\omega < \frac{1}{\bigg[(p-1)\int_\tau^\infty \frac{\alpha(\xi)\sigma^p(\xi)}{\nu^{p-1}(\xi)}\, d\xi \bigg]^{\frac{1}{p-1}}} - h_\tau\nu(\tau). 
$$
Moreover, inequality \eqref{eq16} is equivalent to  
$$
\bigg(\frac{\beta(t)}{\alpha(t)}\bigg)^\frac{1}{p}e^{-\int_0^{t-\tau}\gamma(\xi)\, d\xi} \le \omega,\qquad t\ge \tau.
$$
Thus, for the existence of $\omega$ satisfying both \eqref{eq29zx} and \eqref{eq16}, it suffices to assume that 
\begin{equation}
\label{eqextra1}
\sup_{t\ge\tau} \bigg(\frac{\beta(t)}{\alpha(t)}\bigg)^\frac{1}{p}e^{-\int_0^{t-\tau}\gamma(\xi)\, d\xi} < \frac{1}{\bigg[(p-1)\int_\tau^\infty \frac{\alpha(\xi)\sigma^p(\xi)}{\nu^{p-1}(\xi)}\, d\xi \bigg]^{\frac{1}{p-1}}} - h_\tau\nu(\tau). 
\end{equation}
Given the function $\gamma(t)$ and the numbers $p$, $\tau$, $h_\tau$ and $\nu(\tau)$, inequality \eqref{eqextra1} holds true if $\alpha(t)$ and $\beta(t)/\alpha(t)$ are sufficiently small. In this case, one can choose
\begin{equation}
\label{eqextra2}
\omega: = \sup_{t\ge\tau} \bigg(\frac{\beta(t)}{\alpha(t)}\bigg)^\frac{1}{p}e^{-\int_0^{t-\tau}\gamma(\xi)\, d\xi}.
\end{equation}
In the case when $f(t)$ is absent from equation \eqref{eq1}, i.e., $\beta(t)=0$, then inequality \eqref{eqextra1} becomes
$$
h_\tau\nu(\tau) < \frac{1}{\bigg[(p-1)\int_\tau^\infty \frac{\alpha(\xi)\sigma^p(\xi)}{\nu^{p-1}(\xi)}\, d\xi \bigg]^{\frac{1}{p-1}}}
$$
and one can choose $\omega = 0$.
\end{remark}

\begin{proof}[Proof of Lemma \ref{lemma1}]
Since $h(t)$ is the solution to \eqref{eq8}, it satisfies inequality \eqref{eqz27a}. 
From \eqref{eqz27a} and \eqref{eq16}, one gets
\begin{equation}
\label{eq18}
\begin{split}
\dot{h} &\le \gamma(t)h(t) + \frac{\alpha(t)\sigma^p(t)}{\nu^p(t)}\bigg[ \nu^p(t)h^p(t) + \frac{\beta(t)\nu^p(t)}{\alpha(t)\sigma^p(t)} \bigg]\\
&\le \gamma(t)h(t) + \frac{\alpha(t)\sigma^p(t)}{\nu^p(t)}\bigg[ \nu^p(t)h^p(t) + \omega^p \bigg]\\
&\le \gamma(t)h(t) + \frac{\alpha(t)\sigma^p(t)}{\nu^p(t)}\bigg[ \nu(t)h(t) + \omega\bigg]^p,\qquad t\ge \tau,\qquad p>1.
\end{split}
\end{equation}
Here, the inequality $a^p+b^p \le (a+b)^p$, $a\ge 0$, $b\ge 0$, $p>1$, was used. 
Inequality \eqref{eq18} can be written as
\begin{equation}
\label{eq19}
\frac{d}{dt}\bigg(h(t)\nu(t) + \omega\bigg) \le \frac{\alpha(t)\sigma^p(t)}{\nu^{p-1}(t)} \bigg[ \nu(t)h(t) + \omega\bigg]^p,\qquad t\ge \tau. 
\end{equation}
Thus,
\begin{equation}
\label{eq20}
\frac{1}{1-p}\frac{d}{dt}\bigg[h(t)\nu(t) + \omega\bigg]^{1-p} \le \frac{\alpha(t)\sigma^p(t)}{\nu^{p-1}(t)},\qquad t\ge \tau. 
\end{equation}
Integrate inequality \eqref{eq20} from $\tau$ to $t$ to get
\begin{equation}
\label{eq21}
\frac{\big[h(t)\nu(t)+\omega\big]^{1-p} - \big[h(\tau)\nu(\tau)+\omega\big]^{1-p}}{1-p} \le \int_{\tau}^t \frac{\alpha(\xi)\sigma^p(\xi)}{\nu^{p-1}(\xi)}\, d\xi,\qquad t\ge \tau. 
\end{equation}
This implies
\begin{equation}
\label{eq22}
\big[h(t)\nu(t)+\omega\big]^{1-p} \ge \big[h(\tau)\nu(\tau)+\omega\big]^{1-p} - (p-1)\int_{\tau}^t \frac{\alpha(\xi)\sigma^p(\xi)}{\nu^{p-1}(\xi)}\, d\xi,\qquad t\ge \tau. 
\end{equation}
It follows from \eqref{eq29zx} that the right-hand side of inequality \eqref{eq22} is positive. Thus, from \eqref{eq22} one gets
\begin{equation}
\label{eq23}
\big[h(t)\nu(t)+\omega\big]^{p-1} \le \frac{1}{\big[h_\tau\nu(\tau)+\omega\big]^{1-p} - (p-1)\int_{\tau}^t \frac{\alpha(\xi)\sigma^p(\xi)}{\nu^{p-1}(\xi)}\, d\xi},\qquad t\ge \tau. 
\end{equation}
Inequality \eqref{eq17} follows from \eqref{eq23}. Lemma \ref{lemma1} is proved.
\end{proof}

\begin{theorem}
\label{thm1}
Let Assumption {\bf A} hold. Assume that
\begin{gather}
\label{eq24}
\omega:=\sup_{t\ge\tau} \bigg(\frac{\beta(t)}{\alpha(t)}\bigg)^\frac{1}{p}e^{-\int_0^{t-\tau}\gamma(\xi)\, d\xi} < \frac{1}{\bigg[(p-1)\int_\tau^\infty \frac{\alpha(\xi)\sigma^p(\xi)}{\nu^{p-1}(\xi)}\, d\xi \bigg]^{\frac{1}{p-1}}} - h_\tau\nu(\tau) 
\end{gather}
where
\begin{equation}
\label{eq25}
h_\tau := \frac{\|v(0)\|}{\nu(\tau)} + \frac{\int_0^\tau \big[\alpha(\xi)\nu(\xi)\|v(\xi-\tau)\|^p + \beta(\xi)\nu(\xi)\big]\,d\xi}{\nu(\tau)},
\end{equation}
$\sigma(t) := e^{-\int_\tau^t \gamma(\xi)\,d\xi}$ and $\nu(t) := e^{-\int_0^t \gamma(\xi)\,d\xi}$.
Then the solution to problem \eqref{eq1} exists globally and
\begin{equation}
\label{eq27}
\|u(t)\| \le \frac{1}{\nu(t)}\bigg[ \bigg(\frac{1}{\big[h_\tau\nu(\tau)+\omega\big]^{1-p} - (p-1)\int_\tau^t \frac{\alpha(\xi)\sigma^p(\xi)}{\nu^{p-1}(\xi)}\, d\xi}\bigg)^{\frac{1}{p-1}} - \omega\bigg],\qquad t\ge \tau. 
\end{equation}
In addition, if 
\begin{equation}
\label{eq28}
M:=\sup_{t\ge 0} \int_0^t \gamma(\xi)\, d\xi < \infty,
\end{equation}
then the solution $u(t)$ is bounded. 

If
\begin{equation}
\label{eq29}
\lim_{t\to\infty} \int_0^t \gamma(\xi)\, d\xi = -\infty,
\end{equation}
then
\begin{equation}
\label{eq30}
\lim_{t\to\infty} u(t) = 0.
\end{equation}
\end{theorem}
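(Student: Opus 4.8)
The plan is to assemble the theorem from the three lemmas already in hand, splitting the work into global existence, the explicit bound, and the asymptotics. First I would verify that the hypothesis \eqref{eq24} is exactly the sufficient condition \eqref{eqextra1} of Remark \ref{remark2.4}, with $\omega$ taken as in \eqref{eqextra2}. By that remark, \eqref{eq24} guarantees that this $\omega$ satisfies both \eqref{eq29zx} and \eqref{eq16}, so the hypotheses of Lemma \ref{lemma1} are met. Lemma \ref{lemma1} then yields that the majorizing solution $h(t)$ of equation \eqref{eq8} exists for all $t\ge\tau$ and obeys \eqref{eq17}; since on $[0,\tau]$ the function $h$ is governed by the linear equation \eqref{eq8a} with known right-hand side, $h$ in fact exists on all of $[0,\infty)$, so its maximal interval is $[-\tau,\tilde{T})$ with $\tilde{T}=\infty$.

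Second comes the passage from $h$ back to $u$. By Assumption~A, problem \eqref{eq1} has a local solution $u(t)$; put $g(t):=\|u(t)\|$. Taking the inner product of \eqref{eq1a} with $u$ and invoking \eqref{eq2}--\eqref{eq4} produces inequality \eqref{eq5}, which by Lemma \ref{lemma4} is equivalent to \eqref{eq6}. The comparison Lemma \ref{compare} then gives $g(t)\le h(t)$ on $[0,\tilde{T})=[0,\infty)$ wherever $u$ is defined. The remaining point, which I expect to be the main obstacle, is the continuation argument ruling out finite-time blow-up: because $h(t)$ is finite on every interval $[0,T]$, the bound $\|u(t)\|=g(t)\le h(t)$ is an a priori estimate that prevents $\|u(t)\|$ from escaping to infinity in finite time, and combined with the local existence from Assumption~A the standard continuation alternative forces the maximal existence time of $u$ to equal $+\infty$. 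Once global existence is secured, estimate \eqref{eq27} is simply \eqref{eq17} for $h$ together with $\|u(t)\|\le h(t)$.

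Finally I would read the asymptotics off the explicit bound \eqref{eq27}. The denominator $[h_\tau\nu(\tau)+\omega]^{1-p}-(p-1)\int_\tau^t \frac{\alpha(\xi)\sigma^p(\xi)}{\nu^{p-1}(\xi)}\,d\xi$ decreases in $t$, but the strict inequality \eqref{eq24} keeps it bounded below by a positive constant as $t\to\infty$; hence the bracketed factor in \eqref{eq27} is bounded above by a finite constant $C>0$, giving $\|u(t)\|\le C/\nu(t)=Ce^{\int_0^t\gamma(\xi)\,d\xi}$. Under \eqref{eq28} one has $\int_0^t\gamma(\xi)\,d\xi\le M$, so $\|u(t)\|\le Ce^{M}$ and $u$ is bounded; under \eqref{eq29} one has $\int_0^t\gamma(\xi)\,d\xi\to-\infty$, so $1/\nu(t)\to 0$ and therefore $\|u(t)\|\to 0$, which is \eqref{eq30}. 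The only care needed here is confirming the boundedness of the bracketed factor, which again rests on the strict inequality \eqref{eq24}.
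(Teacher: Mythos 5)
Your proposal is correct and follows essentially the same route as the paper's proof: verify via Remark \ref{remark2.4} that the choice \eqref{eqextra2} of $\omega$ satisfies \eqref{eq29zx} and \eqref{eq16}, apply Lemma \ref{lemma1} to bound $h$, use Lemma \ref{lemma4} and the comparison Lemma \ref{compare} to get $\|u(t)\|\le h(t)$, and then read boundedness and decay off the resulting estimate $\|u(t)\|\le C/\nu(t)$ with $C$ finite by the strict inequality \eqref{eq24}. The only difference is that you spell out two steps the paper compresses — existence of $h$ on $[0,\tau]$ by the method of steps and the continuation alternative ruling out finite-time blow-up of $u$ (the paper simply says inequality \eqref{eq33} and Assumption A imply global existence) — and both of these elaborations are correct.
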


\begin{proof}
Let $u(t)$ be the solution to \eqref{eq1} and $g(t):=\|u(t)\|$. Then from Lemma \ref{compare} one has 
$$
g(t) \le h(t),\qquad t\ge 0.
$$
It follows from \eqref{eq24} and Remark \ref{remark2.4} that inequalities \eqref{eq29zx} and \eqref{eq16} hold.  
Using Lemma \ref{lemma1} one gets
\begin{equation}
\label{eq31}
h(t) \le \frac{1}{\nu(t)} \bigg[\bigg(\frac{1}{\big[h_\tau\nu(\tau) + \omega\big]^{1-p} - (p-1)\int_\tau^t \frac{\alpha(\xi)\sigma^p(\xi)}{\nu^{p-1}(\xi)}\, d\xi}\bigg)^{\frac{1}{p-1}} - \omega\bigg],\qquad t\ge \tau. 
\end{equation}
Therefore,
\begin{equation}
\label{eq32}
\|u(t)\| = g(t) \le h(t) \le \frac{\bigg(\frac{1}{\big[h_\tau\nu(\tau) + \omega\big]^{1-p} - (p-1)\int_\tau^t \frac{\alpha(\xi)\sigma^p(\xi)}{\nu^{p-1}(\xi)}\, d\xi}\bigg)^{\frac{1}{p-1}} - \omega}{\nu(t)},\qquad t\ge \tau. 
\end{equation}
Thus, inequality \eqref{eq27} holds. 

Inequality \eqref{eq32} implies
\begin{equation}
\label{eq33}
\|u(t)\| \le \frac{C}{\nu(t)} = Ce^{\int_0^t \gamma(\xi)\, d\xi},\qquad t\ge \tau,
\end{equation}
where
\begin{equation}
\label{eq34}
C: = \bigg(\frac{1}{\big[h_\tau\nu(\tau) + \omega\big]^{1-p} - (p-1)\int_\tau^\infty \frac{\alpha(\xi)\sigma^p(\xi)}{\nu^{p-1}(\xi)}\, d\xi}\bigg)^{\frac{1}{p-1}} - \omega. 
\end{equation}
Inequality \eqref{eq33} and Assumption {\bf A} imply that the solution $u(t)$ exists globally. 

If inequality \eqref{eq28} holds, then
\begin{equation}
\label{eq35}
e^{\int_0^t \gamma(\xi)\, d\xi} \le e^{M},\qquad \forall t\ge 0.
\end{equation}
This and inequality \eqref{eq33} imply
\begin{equation}
\label{eq36}
\|u(t)\| \le Ce^M,\qquad t\ge \tau.
\end{equation}
Thus, the solution $u(t)$ is bounded on $\mathbb{R}_+$. 

If relation \eqref{eq29} holds, then one gets
\begin{equation}
\label{eq37}
\lim_{t\to\infty}e^{\int_0^t \gamma(\xi)\, d\xi} = 0. 
\end{equation}
Relation \eqref{eq30} follows from inequality \eqref{eq33} and formula \eqref{eq37}. Theorem \ref{thm1} is proved. 
\end{proof}

It follows from \eqref{eq24} in Theorem \ref{thm1} that if the right-hand side of  \eqref{eq24} is positive, then one can obtain the global existence and boundedness of $u(t)$ by choosing $f(t)$ so that $\beta(t):=\|f(t)\|$ is sufficiently small. This raises the question: Given $\beta(t):=\|f(t)\|$, is this possible to `control' $\alpha(t)$ so that the global existence and boundedness of $u(t)$ are still guaranteed? We will address this question in the following results. 

\begin{lemma}
\label{lemma2}
Let $h(t)$ be the solution to equation \eqref{eq8} and 
\begin{equation}
\label{eq38}
\zeta(t) : = \frac{h(\tau)\nu(\tau)}{\nu(t)} + \frac{\int_\tau^t \beta(\xi)\nu(\xi)\, d\xi}{\nu(t)},\qquad t\ge \tau. 
\end{equation}
Assume that 
\begin{equation}
\label{eq39}
 \alpha(t)\sigma^p(t) \le \frac{(q-1)\beta(t)}{[q\zeta(t)]^p},\qquad t\ge \tau,\quad q>1.
\end{equation}
Then
\begin{equation}
\label{eq40}
h(t) \le q \zeta(t),\qquad t\ge \tau. 
\end{equation}
\end{lemma}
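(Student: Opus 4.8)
The plan is to recognize $\zeta(t)$ as the solution of the \emph{linear} part of \eqref{eq8} and then to exhibit $q\zeta(t)$ as a supersolution of the scalar differential inequality that $h$ already satisfies, after which a comparison argument yields \eqref{eq40}.

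First I would differentiate the defining relation \eqref{eq38}. Rewriting \eqref{eq38} as $\zeta(t)\nu(t) = h(\tau)\nu(\tau) + \int_\tau^t \beta(\xi)\nu(\xi)\,d\xi$ and using $\dot{\nu}(t) = -\gamma(t)\nu(t)$, one finds after dividing by $\nu(t)$ that
\begin{equation*}
\dot{\zeta}(t) = \gamma(t)\zeta(t) + \beta(t), \qquad \zeta(\tau) = h(\tau), \qquad t \ge \tau.
\end{equation*}
Thus $\zeta$ solves \eqref{eq8a} with the nonlinear term $\alpha(t)h^p(t-\tau)$ removed, and $\zeta(\tau) = h(\tau) = h_\tau > 0$. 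In particular $\zeta(t) > 0$ for all $t \ge \tau$, since $\beta \ge 0$ and $\nu > 0$.

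Next I would recall that, as already derived in \eqref{eqz27a}, the solution $h$ of \eqref{eq8} satisfies
\begin{equation*}
\dot{h}(t) \le \gamma(t)h(t) + \alpha(t)\sigma^p(t)h^p(t) + \beta(t), \qquad t \ge \tau.
\end{equation*}
Setting $\phi(t) := q\zeta(t) > 0$, a direct computation gives $\dot{\phi} = \gamma\phi + q\beta$, so the supersolution inequality
\begin{equation*}
\dot{\phi}(t) \ge \gamma(t)\phi(t) + \alpha(t)\sigma^p(t)\phi^p(t) + \beta(t)
\end{equation*}
is equivalent to $(q-1)\beta(t) \ge \alpha(t)\sigma^p(t)[q\zeta(t)]^p$, which is precisely hypothesis \eqref{eq39}. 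Hence $\phi = q\zeta$ is a supersolution of the inequality satisfied by $h$, while at the initial time $\phi(\tau) = q\,h(\tau) > h(\tau)$ because $q > 1$ and $h(\tau) > 0$.

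Finally I would run a comparison argument to deduce $h(t) \le \phi(t)$ for $t \ge \tau$. Writing $F(t,y) := \gamma(t)y + \alpha(t)\sigma^p(t)y^p + \beta(t)$, the map $F$ is $C^1$, hence locally Lipschitz, in $y$; so on any compact time interval and bounded range $y \in [0,R]$ there is a constant $L$ with $F(t,y_1) - F(t,y_2) \le L(y_1 - y_2)$ whenever $y_1 \ge y_2 \ge 0$. Suppose $h(T) > \phi(T)$ for some $T$, and let $t_0 := \sup\{t \in [\tau,T) : h(t) \le \phi(t)\}$, so that $t_0 \ge \tau$, $h(t_0) = \phi(t_0)$, and $h > \phi$ on $(t_0,T]$. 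On this interval the difference $\rho := h - \phi$ obeys $\dot{\rho} \le F(t,h) - F(t,\phi) \le L\rho$, whence $\rho(t)e^{-Lt}$ is nonincreasing; since $\rho(t_0) = 0$ this forces $\rho(T) \le 0$, contradicting $\rho(T) > 0$. Therefore $h \le q\zeta$ on $[\tau,\infty)$. The main obstacle is exactly this comparison step: the strict initial ordering $h(\tau) < \phi(\tau)$ does not by itself suffice, because at a first crossing point the two functions may meet tangentially (there $\dot{h} = \dot{\phi}$, with no immediate contradiction), so it is the local Lipschitz bound together with the Gronwall-type estimate that actually prevents $h$ from overtaking $q\zeta$.
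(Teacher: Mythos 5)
Your proof is correct, and it closes the comparison by a genuinely different mechanism than the paper's. You use hypothesis \eqref{eq39} to verify, pointwise and independently of $h$, that $\phi:=q\zeta$ is a supersolution of the inequality \eqref{eqz27a} satisfied by $h$ (your computation $\dot{\zeta}=\gamma\zeta+\beta$, $\zeta(\tau)=h(\tau)$ is right), and you then invoke a standard comparison argument, defusing the tangential-crossing pitfall with a one-sided local Lipschitz bound for $y\mapsto \gamma(t)y+\alpha(t)\sigma^p(t)y^p+\beta(t)$ on bounded ranges plus a Gronwall estimate for $\rho=h-\phi$. The paper instead runs an integral bootstrap: it takes the maximal $T$ with $h\le q\zeta$ on $[\tau,T]$, inserts this bound into the nonlinear term of \eqref{eqz27a} and uses \eqref{eq39} to linearize, $\dot{h}\le \gamma(t)h+q\beta(t)$ on $[\tau,T]$, then integrates with the factor $\nu(t)$ to obtain the \emph{strict} bound $h(t)< q\zeta(t)$ on all of $[\tau,T]$ (strictness uses $q>1$ and $h(\tau)>0$), contradicting $h(T)=q\zeta(T)$ outright — so no Lipschitz or Gronwall machinery is ever needed, and the tangency issue you correctly flagged simply never arises; note that in Lemma \ref{compare} the paper handles the same issue by the different device of a $1/n$-perturbation. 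The trade-offs: your route makes the logical role of \eqref{eq39} transparent (it is exactly the condition that $q\zeta$ be a supersolution), works verbatim even if $h(\tau)=0$ so long as $\zeta>0$ keeps \eqref{eq39} meaningful, and is the off-the-shelf ODE comparison theorem; the paper's route is more elementary and self-contained, exploits the specific structure by producing the explicit intermediate linear bound $\dot{h}\le\gamma h+q\beta$ (whose integrated form is precisely $\zeta$, explaining where \eqref{eq38} comes from), and yields the strict inequality directly. Both proofs rest on the same algebraic reduction of the nonlinearity via \eqref{eq39}, so the difference is in the comparison step, where your Lipschitz requirement is harmless here since $p>1$ and $h$, $\zeta$ are continuous on compact intervals.
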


\begin{proof}
From equation \eqref{eq38} with $t=\tau$ and the assumption $q>1$, one gets
\begin{equation}
h(\tau) = \zeta(\tau)<q\zeta(\tau).
\end{equation}
It follows from the continuity of $\zeta(t)$ and $h(t)$ that there exists $\theta>0$ such that
\begin{equation}
h(t) \le q\zeta(t),\qquad \tau \le t\le \tau + \theta. 
\end{equation} 
Let $T>0$ be the largest real value such that 
\begin{equation}
\label{eqx57}
h(t) \le q\zeta(t),\qquad \tau \le t\le T. 
\end{equation}
We claim that $T= \infty$. Assume the contrary. Then $T$ is finite and by the definition of $T$ we have
\begin{equation}
\label{eq44}
h(T) = q\zeta(T). 
\end{equation}
Since $h(t)$ is the solution to \eqref{eq8}, it satisfies inequality \eqref{eqz27a}.
It follows from inequalities \eqref{eqz27a}, \eqref{eqx57}, and \eqref{eq39} that
\begin{equation}
\label{eq45}
\begin{split}
\dot{h}(t) &\le \gamma(t)h(t) + \alpha(t)\sigma^p(t)[q\zeta(t)]^p + \beta(t)\\
&\le \gamma(t)h(t) + (q-1)\beta(t) + \beta(t) = \gamma(t)h(t) + q\beta(t),\qquad \tau \le t\le T.
\end{split}
\end{equation}
From \eqref{eq45} one gets
\begin{equation}
\frac{d}{dt}\big[\nu(t)h(t)\big] \le q \nu(t)\beta(t),\qquad \tau\le t\le T.
\end{equation}
Integrate this inequality from $\tau$ to $t$ to get
\begin{equation}
\nu(t)h(t) - \nu(\tau)h(\tau) \le q\int_\tau^t \beta(\xi)\nu(\xi)\, d\xi,\qquad \tau \le t\le T.
\end{equation}
This and \eqref{eq38} imply
\begin{equation}
\begin{split}
h(t) &\le \frac{\nu(\tau)h(\tau)}{\nu(t)} + \frac{q\int_\tau^t \beta(\xi)\nu(\xi)\, d\xi}{\nu(t)}\\
 &<  q\bigg(\frac{\nu(\tau)h(\tau)}{\nu(t)} + \frac{\int_\tau^t \beta(\xi)\nu(\xi)\, d\xi}{\nu(t)} \bigg)= q\zeta(t),\qquad \tau\le t\le T.
\end{split}
\end{equation}
Therefore,
\begin{equation}
h(T) < q\zeta(T).
\end{equation}
This contradicts to equation \eqref{eq44}. The contradiction implies that $T=\infty$, i.e., inequality \eqref{eq40} holds. Lemma \ref{lemma2} is proved. 
\end{proof}

\begin{theorem}
\label{thm2}
Let Assumption A hold. Assume that
\begin{equation}
\label{eqc66}
h(\tau)>0,\qquad \alpha(t)\sigma^p(t) \le \frac{(q-1)\beta(t)}{[q\zeta(t)]^p},
\qquad t\ge \tau,\qquad q>1,
\end{equation}
where
$$
\sigma(t) = e^{-\int_{t-\tau}^t \gamma(\xi)\, d\xi},\quad 
\zeta(t)=\frac{h(\tau)\nu(\tau)}{\nu(t)} + \frac{\int_\tau^t \beta(\xi)\nu(\xi)\, d\xi}{\nu(t)},\quad t\ge \tau, \quad \nu(t)=e^{-\int_0^t\gamma(\xi)d\xi}.
$$
Then the solution $u(t)$ to equation \eqref{eq1} exists globally. 

If
\begin{equation}
\label{eq50}
M =\sup_{t\ge \tau}\int_\tau^t \gamma(\xi)\, d\xi < \infty,\qquad \int_\tau^\infty \beta(\xi)\nu(\xi)\, d\xi < \infty,
\end{equation}
then the solution $u(t)$ to problem \eqref{eq1} is bounded. 

If 
\begin{equation}
\label{eq51}
\lim_{t\to\infty}\int_\tau^t \gamma(\xi)\, d\xi = -\infty
\end{equation}
and either
\begin{equation}
\label{eqc69}
\qquad \int_\tau^\infty \beta(\xi)\nu(\xi)\, d\xi < \infty \qquad {\rm or}\qquad \lim_{t\to\infty}\frac{\beta(t)}{\gamma(t)} = 0,
\end{equation}
then 
\begin{equation}
\label{eq52}
\lim_{t\to\infty} u(t) = 0.
\end{equation}
\end{theorem}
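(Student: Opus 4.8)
The plan is to reduce everything to the two comparison results already in hand and then read off the asymptotics of a single auxiliary function. First I would note that the standing hypothesis \eqref{eqc66} is literally the hypothesis \eqref{eq39} of Lemma \ref{lemma2}, and that $h(\tau)>0$ supplies the strict inequality $h(\tau)=\zeta(\tau)<q\zeta(\tau)$ that initiates the continuation argument there. Setting $g(t):=\|u(t)\|$, Lemma \ref{compare} gives $g(t)\le h(t)$ on the interval of existence and Lemma \ref{lemma2} gives $h(t)\le q\zeta(t)$ for $t\ge\tau$; chaining these produces the master estimate
\begin{equation*}
\|u(t)\|\le q\zeta(t),\qquad t\ge\tau .
\end{equation*}
It is worth recording that $\zeta$ is exactly the solution of the linearized problem $\dot\zeta=\gamma(t)\zeta+\beta(t)$, $\zeta(\tau)=h(\tau)$ (differentiate $\zeta(t)\nu(t)=h(\tau)\nu(\tau)+\int_\tau^t\beta\nu$ and use $\nu'=-\gamma\nu$), so the three conclusions amount to asymptotic statements about this linear comparison function.

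For global existence I would run the standard a priori bound argument: on the maximal interval of existence $[0,T_{\max})$ the estimate $\|u(t)\|\le q\zeta(t)$ holds, while $\zeta$ is continuous and finite on all of $[\tau,\infty)$ since it is assembled from $\nu$, $\beta$, and an integral over a finite interval. Hence $\|u\|$ cannot blow up in finite time and Assumption \textbf{A} forces $T_{\max}=\infty$. For boundedness I would estimate $\zeta$ directly. Writing $\nu(\tau)/\nu(t)=e^{\int_\tau^t\gamma}$ and $1/\nu(t)=\nu(\tau)^{-1}e^{\int_\tau^t\gamma}$, condition \eqref{eq50} gives $e^{\int_\tau^t\gamma}\le e^{M}$, so
\begin{equation*}
\zeta(t)\le h(\tau)e^{M}+\frac{e^{M}}{\nu(\tau)}\int_\tau^\infty\beta(\xi)\nu(\xi)\,d\xi ,
\end{equation*}
which is finite by the second part of \eqref{eq50}; thus $\|u\|\le q\zeta$ is bounded.

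For the decay statement I would split $\zeta=\zeta_1+\zeta_2$ with $\zeta_1(t)=h(\tau)\nu(\tau)/\nu(t)$ and $\zeta_2(t)=\nu(t)^{-1}\int_\tau^t\beta\nu$. Under \eqref{eq51} one has $\int_0^t\gamma\to-\infty$, hence $\nu(t)\to\infty$ and $\zeta_1(t)=h(\tau)e^{\int_\tau^t\gamma}\to0$. For $\zeta_2$ there are two cases corresponding to \eqref{eqc69}. In the first alternative the numerator $\int_\tau^t\beta\nu$ converges to a finite limit while $\nu(t)\to\infty$, so $\zeta_2\to0$ immediately. In the second alternative I would fix $\epsilon>0$, choose $T_0$ with $\beta(t)\le\epsilon|\gamma(t)|$ for $t\ge T_0$ (legitimate since $\beta\ge0$ and $\beta/\gamma\to0$ force $\beta/|\gamma|\to0$), split the integral at $T_0$, bound the fixed piece by $\nu(T_0)^{-1}\!\int_\tau^{T_0}\beta\nu$ divided by $\nu(t)\to\infty$, and use $\nu'=-\gamma\nu$ to get $\int_{T_0}^t(-\gamma)\nu=\nu(t)-\nu(T_0)\le\nu(t)$, so that the remaining piece contributes at most $\epsilon$. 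Hence $\limsup_{t\to\infty}\zeta_2(t)\le\epsilon$, and letting $\epsilon\to0$ gives $\zeta_2\to0$ and therefore $\|u(t)\|\to0$.

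The main obstacle is precisely this second alternative of the decay case. The clean identity $\int_{T_0}^t(-\gamma)\nu=\nu(t)-\nu(T_0)$ needs $\gamma(t)\le0$ for large $t$, so that $|\gamma|\nu$ equals $\nu'$ rather than $|\nu'|$; if $\gamma$ were allowed to oscillate in sign for arbitrarily large $t$, the total variation of $\nu$ could outgrow $\nu(t)$ and the estimate would break down. I would therefore make explicit that the hypotheses $\int_\tau^t\gamma\to-\infty$ and $\beta/\gamma\to0$ are being used in the regime where $\gamma$ is eventually negative, and would verify that the argument indeed only invokes $\beta\le\epsilon(-\gamma)$ past some $T_0$. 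The remaining steps are routine once the master estimate $\|u\|\le q\zeta$ and the monotonicity substitution $\nu'=-\gamma\nu$ are in place.
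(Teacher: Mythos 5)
Your proposal is correct, and its skeleton is the same as the paper's: chain Lemma \ref{compare} with Lemma \ref{lemma2} to get the master estimate $\|u(t)\|\le h(t)\le q\zeta(t)$ for $t\ge\tau$, then read off global existence, boundedness, and decay from the explicit comparison function $\zeta$ (your observation that $\zeta$ solves the linearization $\dot\zeta=\gamma(t)\zeta+\beta(t)$, $\zeta(\tau)=h(\tau)$, is not in the paper but is correct and clarifying). The one genuine divergence is the second alternative of \eqref{eqc69}: where you hand-roll an $\epsilon$-splitting (Stolz--Ces\`aro-type) argument, the paper simply applies L'Hospital's rule to $\int_\tau^t\beta(\xi)\nu(\xi)\,d\xi/\nu(t)$, using $\dot\nu=-\gamma\nu$ so that the ratio of derivatives is $-\beta(t)/\gamma(t)\to 0$. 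The obstacle you flag --- that your identity $\int_{T_0}^t(-\gamma(\xi))\nu(\xi)\,d\xi=\nu(t)-\nu(T_0)$ only dominates $\int_{T_0}^t\beta\nu\,d\xi$ if $\gamma\le 0$ past $T_0$ --- is real but is already discharged by the paper's standing assumptions, so no extra hypothesis is needed: the existence of the limit $\lim_{t\to\infty}\beta(t)/\gamma(t)=0$ presupposes $\gamma(t)\ne 0$ for all large $t$; since $\gamma$ is assumed continuous on $[0,\infty)$, it then has constant sign on some $[T_1,\infty)$; and a positive sign would make $\int_\tau^t\gamma(\xi)\,d\xi$ bounded below by $\int_\tau^{T_1}\gamma(\xi)\,d\xi$, contradicting \eqref{eq51}. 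Hence $\gamma<0$ eventually and your choice of $T_0$ is legitimate. Note that the paper's L'Hospital step tacitly relies on the very same fact ($\dot\nu\ne 0$ near infinity), so your more elementary route, once this sign point is made explicit, is slightly more self-contained than the published proof; what L'Hospital buys the paper is brevity. Everything else in your write-up (the a priori bound argument for global existence via Assumption A, and the boundedness estimate, where your bookkeeping of the constant $\nu(\tau)$ is in fact more careful than the paper's $\nu(t)\ge e^{-M}$) matches the paper's proof.
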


\begin{proof}
Let $g(t) = \|u(t)\|$. Then from Lemma \ref{compare} we have
\begin{equation}
\|u(t)\| = g(t) \le h(t),\qquad t\ge \tau. 
\end{equation}
This and Lemma \ref{lemma2} imply
\begin{equation}
\label{eq54}
\|u(t)\| \le h(t)\le q \zeta(t) = \frac{h(\tau)\nu(\tau)}{\nu(t)} + \frac{\int_\tau^t \beta(\xi)\nu(\xi)\,d\xi}{\nu(t)},\qquad t\ge \tau. 
\end{equation}
It follows from \eqref{eq54} and Assumption A that the solution $u(t)$ to equation \eqref{eq1} exists globally. 

If relation \eqref{eq50} hold, then
\begin{equation}
\nu(t) = e^{-\int_0^t \gamma(\xi)\, d\xi} \ge e^{-M},\qquad \forall t\ge \tau. 
\end{equation}
This, inequality \eqref{eq54}, and the second inequality in \eqref{eq50} imply
\begin{equation}
\|u(t)\| \le e^M \bigg(h(\tau)\nu(\tau) + \int_0^\infty \beta(\xi)\nu(\xi)\, d\xi \bigg) < \infty.
\end{equation}
This means the solution $u(t)$ is bounded on $\mathbb{R}_+$.

If relation \eqref{eq51} holds, then one gets
\begin{equation}
\label{eq57}
\lim_{t\to\infty} \nu(t) = \lim_{t\to\infty} e^{-\int_\tau^t \gamma(\xi)\, d\xi} = \infty. 
\end{equation}
We claim that 
\begin{equation}
\label{eq58}
\lim_{t\to\infty} \frac{\int_\tau^t \beta(\xi)\nu(\xi)\, d\xi}{\nu(t)} = 0.
\end{equation}
Indeed, if $\int_\tau^\infty \beta(\xi)\nu(\xi)\, d\xi <\infty$, then relation \eqref{eq58} follows from \eqref{eq57}. 
If $\int_\tau^\infty \beta(\xi)\nu(\xi)\, d\xi = \infty$, then relation \eqref{eq58} follows from the second relation in \eqref{eqc69} and L'Hospital's rule. 

From inequality \eqref{eq54} and equalities \eqref{eq57} and \eqref{eq58} one obtains
\begin{equation}
 \lim_{t\to\infty}\|u(t)\| \le  \lim_{t\to\infty} \frac{h(\tau)\nu(\tau)}{\nu(t)}  + 
 \lim_{t\to\infty} \frac{\int_\tau^t \beta(\xi)\nu(\xi)\,d\xi}{\nu(t)} = 0.
 \end{equation} 
 Thus, equality \eqref{eq52} holds. Theorem \ref{thm2} is proved.  
\end{proof}

If $f \equiv 0$, or equivalently $\beta(t)\equiv 0$, then the second inequality in \eqref{eqc66} implies that $\alpha(t)\equiv 0$. Thus, the result in Theorem \ref{thm2} is not very interesting if $f \equiv 0$. However, if the functions $\gamma(t)$ and $\beta(t)$ satisfy either inequalities \eqref{eq50} or inequalities \eqref{eq51} and \eqref{eqc69}, then the global existence and boundedness of the solution $u(t)$ can be obtained if $\alpha(t)$ is sufficiently small so that the second inequality in \eqref{eqc66} holds.

Let us consider equation \eqref{eq1} with $f=0$:
\begin{subequations}
\label{eq14x}
\begin{align}
\dot{u} &= A(t)u + G(t,u(t-\tau)),\qquad t\ge 0,\qquad \dot{u} := \frac{d u}{dt},\\
u(t) &= v(t),\qquad -\tau\le t\le 0,\qquad v(t)\in C([-\tau,0];\mathcal{H}),\qquad \tau=const>0.
\end{align}
\end{subequations}

For equation \eqref{eq14x} we use the following assumption:

{\bf Assumption B:}

\noindent{\it The equation
\begin{subequations}
\begin{align}
\dot{u} &= A(t)u + G(t,u(t-\tau)),\qquad t\ge a,\qquad 
\dot{u} := \frac{d u}{dt},\\
u(t) &= v_{a}(t),\qquad a-\tau\le t\le a,\qquad 
v_a \in C([a-\tau,a];\mathcal{H}),\qquad \tau=const >0,
\end{align}
\end{subequations}
has a unique local solution for all $a\ge 0$ and $v_a \in C([a-\tau,a];\mathcal{H})$.}

Using Theorem \ref{thm1} with $\omega = 0$ for equation \eqref{eq14x} we have the following corollary:
\begin{corollary}
\label{corollary1}
Let Assumption {\bf B} hold and $u(t)$ be the solution to \eqref{eq14x}. Assume that
\begin{gather}
\label{eq62} \frac{1}{\big[\tilde{h}_{\tau}\nu(\tau)\big]^{p-1}} > (p-1) \int_\tau^\infty \frac{\alpha(\xi) \sigma^p(\xi)}{\nu^{p-1}(\xi)}\, d\xi,\qquad \sigma(t) := e^{-\int_{t-\tau}^t \gamma(\xi)\, d\xi},\\
\label{eq63} 
\tilde{h}_{\tau} := \frac{\|v(0)\|}{\nu(\tau)} + \frac{\int_0^\tau \alpha(\xi)\nu(\xi)\|v(\xi-\tau)\|^p \,d\xi}{\nu(\tau)},\qquad \nu(t) = e^{-\int_0^t \gamma(\xi)\,d\xi}.
\end{gather}
Then the solution to problem \eqref{eq14x} exists globally and
\begin{equation}
\label{eqx79}
\|u(t)\| \le \frac{1}{\nu(t)} \bigg(\frac{1}{\big[\tilde{h}_\tau\nu(\tau)\big]^{1-p} - (p-1)\int_\tau^t \frac{\alpha(\xi)\sigma^p(\xi)}{\nu^{p-1}(\xi)}\, d\xi}\bigg)^{\frac{1}{p-1}},\qquad t\ge \tau. 
\end{equation}

In addition, if 
\begin{equation}
\label{eq65}
\sup_{t\ge 0} \int_0^t \gamma(\xi)\, d\xi < \infty,
\end{equation}
then the solution $u(t)$ is bounded. 

If
\begin{equation}
\label{eq66}
\lim_{t\to\infty} \int_0^t \gamma(\xi)\, d\xi = -\infty,
\end{equation}
then
\begin{equation}
\label{eq67}
\lim_{t\to\infty} u(t) = 0.
\end{equation}
\end{corollary}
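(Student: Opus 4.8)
The plan is to derive Corollary \ref{corollary1} as a direct specialization of Theorem \ref{thm1}, taking $f\equiv 0$ (hence $\beta(t)\equiv 0$ in \eqref{eq4}) and choosing the constant $\omega=0$. First I would note that equation \eqref{eq14x} is precisely equation \eqref{eq1} with $f\equiv 0$, and that Assumption \textbf{B} is exactly Assumption \textbf{A} for this choice of $f$; thus the local solvability hypotheses required by Theorem \ref{thm1} are in force. With $\beta\equiv 0$ the integral $\int_0^\tau\beta(\xi)\nu(\xi)\,d\xi$ appearing in the definition \eqref{eq25} of $h_\tau$ drops out, so $h_\tau$ reduces to the quantity $\tilde h_\tau$ defined in \eqref{eq63}.

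Next I would justify the admissibility of $\omega=0$, which is the only genuinely delicate point, since Lemma \ref{lemma1} and Theorem \ref{thm1} were phrased for $\omega=\text{const}>0$. As already observed in Remark \ref{remark2.4}, when $\beta\equiv 0$ the hypothesis \eqref{eq16} becomes $0\le\omega^p$ and is satisfied vacuously by $\omega=0$, while the threshold \eqref{eq24} (equivalently \eqref{eq29zx}) with $\omega=0$ reduces to $\frac{1}{[\tilde h_\tau\nu(\tau)]^{p-1}}>(p-1)\int_\tau^\infty \frac{\alpha(\xi)\sigma^p(\xi)}{\nu^{p-1}(\xi)}\,d\xi$, which is exactly \eqref{eq62}. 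I would then point out that the proof of Lemma \ref{lemma1} carries over unchanged to $\omega=0$: the elementary inequality $a^p+b^p\le(a+b)^p$ invoked in \eqref{eq18} is a trivial identity when $b=\omega=0$, and the subsequent integration \eqref{eq19}--\eqref{eq23} nowhere uses $\omega>0$. Consequently the estimate \eqref{eq27} holds with $\omega=0$ and $h_\tau=\tilde h_\tau$, which is precisely \eqref{eqx79}.

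Finally, the remaining conclusions transfer verbatim. Global existence follows from \eqref{eqx79} together with Assumption \textbf{B}, exactly as \eqref{eq33} yields global existence in the proof of Theorem \ref{thm1}; boundedness under \eqref{eq65} is the conclusion of Theorem \ref{thm1} under its hypothesis \eqref{eq28}, and \eqref{eq65} coincides with \eqref{eq28}; and the decay \eqref{eq67} under \eqref{eq66} is the conclusion of Theorem \ref{thm1} under its hypothesis \eqref{eq29}, with \eqref{eq66} identical to \eqref{eq29}.

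The main obstacle is not computational but conceptual: one must confirm that $\omega=0$ is a legitimate boundary value of the parameter in Theorem \ref{thm1} rather than an excluded degenerate case. Once it is checked that the sole place where $\omega>0$ seemed to matter—the constraint \eqref{eq16}—is rendered vacuous by $\beta\equiv 0$, the corollary follows with no additional estimation, being a pure specialization of Theorem \ref{thm1}.
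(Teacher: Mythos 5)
Your proposal is correct and follows essentially the same route as the paper, which obtains Corollary \ref{corollary1} precisely by applying Theorem \ref{thm1} with $\omega=0$ to equation \eqref{eq14x}, the admissibility of $\omega=0$ when $\beta\equiv 0$ having already been noted in Remark \ref{remark2.4}. Your explicit check that the proof of Lemma \ref{lemma1} goes through at the boundary value $\omega=0$ is a careful touch, but it does not constitute a different approach.
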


If $\|v(t)\|$ is sufficiently small on $[-\tau,0]$, then number
$$
\tilde{h}_{\tau} := \frac{\|v(0)\|}{\nu(\tau)} + \frac{\int_0^\tau \alpha(\xi)\nu(\xi)\|v(\xi-\tau)\|^p \,d\xi}{\nu(\tau)}
$$
is sufficiently small and, therefore, the quotient $1/\big[\tilde{h}_{\tau}\nu(\tau)\big]^{p-1}$ is sufficiently large. Thus, inequality \eqref{eq62} holds if  $\|v(t)\|$ is sufficiently small on $[-\tau,0]$. 
If relation \eqref{eq65} holds, then the function $\frac{1}{\nu(t)} =  e^{\int_0^t \gamma(\xi)\, d\xi}$ is bounded. 
Therefore, the right-hand side of \eqref{eqx79} can be made arbitrarily small by making $\tilde{h}_\tau$ sufficiently small. Hence, it follows from \eqref{eqx79} that $\sup_{t\ge 0}\|u(t)\|$ can be made arbitrarily small by making $\|v(t)\|$ sufficiently small on $[-\tau,0]$. If \eqref{eq62} and \eqref{eq66} hold, then  relation \eqref{eq67} holds. These arguments and Corollary \ref{corollary1} yield the following result:

\begin{corollary}
Let Assumption B hold. Assume that
\begin{gather}
\label{eq68}
\int_\tau^\infty \frac{\alpha(\xi)\sigma^p(\xi)}{\nu^{p-1}(\xi)}\, d\xi < \infty,\quad \nu(t) := e^{-\int_0^t \gamma(\xi)\, d\xi},\quad \sigma(t) := e^{-\int_{t-\tau}^t \gamma(\xi)\, d\xi}.
\end{gather} 
If 
\begin{equation}
\label{eq80}
\sup_{t\ge 0}\int_0^t \gamma(\xi)\, d\xi < \infty,
\end{equation}
then the equilibrium solution $u=0$ to equation \eqref{eq14x} is Lyapunov stable. 
If 
\begin{equation}
\lim_{t\to \infty}\int_0^t \gamma(\xi)\, d\xi = - \infty,
\end{equation}
then the equilibrium solution $u=0$ to problem \eqref{eq14x} is asymptotically stable, i.e., $\lim_{t\to\infty} u(t) = 0$ if $\|u(t)\|$ is sufficiently small on $[-\tau,0]$. 
\end{corollary}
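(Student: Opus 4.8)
The plan is to read both assertions off Corollary~\ref{corollary1}, exploiting that the threshold quantity $\tilde h_\tau$ in \eqref{eq63} can be made arbitrarily small by shrinking the initial function $v$ in the norm of $C([-\tau,0];\mathcal{H})$. Write $\|v\|_\infty:=\sup_{-\tau\le s\le 0}\|v(s)\|$ and $I:=\int_\tau^\infty\frac{\alpha(\xi)\sigma^p(\xi)}{\nu^{p-1}(\xi)}\,d\xi$, which is finite by \eqref{eq68}. From \eqref{eq63} and the bounds $\|v(0)\|,\|v(\xi-\tau)\|\le\|v\|_\infty$ one gets
\[
\tilde h_\tau\le\frac{\|v\|_\infty}{\nu(\tau)}+\frac{\|v\|_\infty^{\,p}\int_0^\tau\alpha(\xi)\nu(\xi)\,d\xi}{\nu(\tau)},
\]
so $\tilde h_\tau\to0$ as $\|v\|_\infty\to0$; since $p>1$ forces $1-p<0$, this makes $[\tilde h_\tau\nu(\tau)]^{\,1-p}\to\infty$, and hence inequality \eqref{eq62} holds once $\|v\|_\infty$ is small enough.

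First I would prove Lyapunov stability. Fix $\varepsilon>0$. For $\|v\|_\infty$ small, \eqref{eq62} holds, so Corollary~\ref{corollary1} supplies a global solution obeying \eqref{eqx79}. Assumption \eqref{eq80} gives $M_0:=\sup_{t\ge0}e^{\int_0^t\gamma(\xi)\,d\xi}<\infty$, i.e. $1/\nu(t)\le M_0$ for all $t\ge0$, and since $\int_\tau^t\frac{\alpha(\xi)\sigma^p(\xi)}{\nu^{p-1}(\xi)}\,d\xi\le I$ the denominator in \eqref{eqx79} is bounded below by $[\tilde h_\tau\nu(\tau)]^{\,1-p}-(p-1)I$ uniformly in $t\ge\tau$; therefore
\[
\sup_{t\ge\tau}\|u(t)\|\le M_0\Bigg(\frac{1}{[\tilde h_\tau\nu(\tau)]^{\,1-p}-(p-1)I}\Bigg)^{\frac1{p-1}}\longrightarrow0\quad\text{as }\|v\|_\infty\to0.
\]
On $[-\tau,0]$ one has $\|u(t)\|=\|v(t)\|\le\|v\|_\infty$, while on $[0,\tau]$ integrating \eqref{eq13} with $\beta\equiv0$ gives $\|u(t)\|\le h(t)=\frac1{\nu(t)}\big[\|v(0)\|+\int_0^t\nu(\xi)\alpha(\xi)\|v(\xi-\tau)\|^p\,d\xi\big]\le M_0\big(\|v\|_\infty+\|v\|_\infty^{\,p}\int_0^\tau\nu(\xi)\alpha(\xi)\,d\xi\big)$, which again tends to $0$. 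Combining the three ranges, for every $\varepsilon>0$ there is $\delta>0$ such that $\|v\|_\infty<\delta$ implies $\sup_{t\ge-\tau}\|u(t)\|<\varepsilon$; this is Lyapunov stability of $u=0$.

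For asymptotic stability I would first note that $\lim_{t\to\infty}\int_0^t\gamma(\xi)\,d\xi=-\infty$ forces \eqref{eq80}: the continuous map $t\mapsto\int_0^t\gamma$ tends to $-\infty$ and is therefore bounded above on $[0,\infty)$. Hence the Lyapunov-stability step applies and, for small $v$, produces a global bounded solution satisfying \eqref{eq62}. The decay conclusion \eqref{eq67} of Corollary~\ref{corollary1} needs exactly \eqref{eq62} together with $\lim_{t\to\infty}\int_0^t\gamma=-\infty$, both now in force, so $\lim_{t\to\infty}u(t)=0$; Lyapunov stability and this attractivity give asymptotic stability.

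The crux is the reduction to Corollary~\ref{corollary1}, legitimate only because \eqref{eq68} makes $I$ finite: this is precisely what lets the smallness of $\tilde h_\tau$ overcome the fixed number $(p-1)I$ and thereby satisfy the global-existence condition \eqref{eq62}. The one routine point to watch is that \eqref{eqx79} controls $u$ only for $t\ge\tau$, so the interval $[-\tau,\tau]$ must be bounded separately by the elementary estimates above; the remaining passage $\tilde h_\tau\to0\Rightarrow\sup_t\|u\|\to0$ is then automatic.
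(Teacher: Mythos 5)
Your proposal is correct and follows essentially the same route as the paper: the paper's own justification is the paragraph preceding the corollary, which likewise observes that smallness of $\|v\|$ on $[-\tau,0]$ makes $\tilde{h}_\tau$ small enough (thanks to the finiteness of the integral in \eqref{eq68}) for \eqref{eq62} to hold, then invokes Corollary \ref{corollary1} together with the boundedness of $1/\nu(t)$ under \eqref{eq80} for Lyapunov stability, and \eqref{eq66} for the decay conclusion. Your additional details --- the explicit bound on $[0,\tau]$ via integrating \eqref{eq13} with $\beta\equiv 0$, and the remark that \eqref{eq66} implies \eqref{eq80} --- merely fill in steps the paper leaves implicit.
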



Using arguments similar to the ones in \cite{R5}, one obtains the following result:

\begin{lemma}
\label{ramm1}
 If there exists a function $\mu(t)>0$, defined for all $t\ge -\tau$, such that
\begin{subequations}
\label{eqxx20}
\begin{align}
\alpha(t)\frac{\mu(t)}{\mu^p(t-\tau)} +\beta(t)\mu(t)&\le -\gamma(t) - \frac{\dot{\mu}(t)}{\mu(t)},\qquad t\ge 0,\\
\|u(t)\| &\le \frac{1}{\mu(t)},\qquad t\in [-\tau,0],
\end{align}
\end{subequations}
then the solution to equation \eqref{eq1} exists for all $t\ge 0$ and
$$
\|u(t)\| \le \frac{1}{\mu(t)},\qquad t\ge 0.
$$
\end{lemma}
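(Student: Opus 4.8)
The plan is to reduce the whole statement to a scalar comparison for the quantity $\phi(t):=\|u(t)\|\,\mu(t)=g(t)\mu(t)$, where $g(t):=\|u(t)\|$ and $u$ is the local solution furnished by Assumption A, and to show that $\phi(t)\le 1$ for all $t\ge 0$; this is exactly the asserted bound $\|u(t)\|\le 1/\mu(t)$. First I would record the differential inequality for $\phi$. Since $g$ obeys \eqref{eq6a}, i.e. $\dot g\le \gamma g+\alpha g^p(t-\tau)+\beta$ with $g=\|v\|$ on $[-\tau,0]$, differentiating $\phi=g\mu$ and substituting $g(t-\tau)=\phi(t-\tau)/\mu(t-\tau)$ yields
\[
\dot\phi(t)\le a(t)\phi(t)+b(t)\phi^p(t-\tau)+c(t),
\]
where $a:=\gamma+\dot\mu/\mu$, $b:=\alpha\mu/\mu^p(t-\tau)\ge 0$, and $c:=\beta\mu\ge 0$. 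The role of hypothesis \eqref{eqxx20} is precisely that it reads $b(t)+c(t)\le -a(t)$, i.e. $a(t)+b(t)+c(t)\le 0$; in particular $a(t)\le 0$ because $b,c\ge 0$. The initial bound $\|u\|\le 1/\mu$ on $[-\tau,0]$ becomes $\phi\le 1$ on $[-\tau,0]$.

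The heart of the proof is the comparison. The natural super-solution is the constant $1$, but the hypothesis only gives the non-strict relation $a+b+c\le 0$, so at a first contact time the derivative test produces merely $\dot\phi\le 0$ and no contradiction; breaking this tie is the main obstacle. To resolve it I would fix an arbitrary $\mathcal T$ in the interval of existence, choose $L>\sup_{[0,\mathcal T]}\big(a(t)+(p+1)b(t)\big)$, and use the perturbed super-solution $M_\epsilon(t):=1+\epsilon e^{Lt}$. Using $a+b+c\le 0$, the monotonicity $M_\epsilon(t-\tau)\le M_\epsilon(t)$, and the elementary estimate $(1+s)^p-1\le (p+1)s$ valid for $s$ small (here $s=\epsilon e^{Lt}\le \epsilon e^{L\mathcal T}$), a short computation shows that for all sufficiently small $\epsilon>0$ the function $M_\epsilon$ is a strict super-solution,
\[
\dot M_\epsilon(t) > a(t)M_\epsilon(t)+b(t)M_\epsilon^p(t-\tau)+c(t),\qquad 0\le t\le \mathcal T .
\]

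Since $\phi<M_\epsilon$ on $[-\tau,0]$, I would then run the first-crossing argument exactly as in Lemma \ref{lemma2}: if $T_0$ were the first time with $\phi(T_0)=M_\epsilon(T_0)$ and $\phi<M_\epsilon$ earlier, then monotonicity of $y\mapsto y^p$ gives $\phi^p(T_0-\tau)\le M_\epsilon^p(T_0-\tau)$, so $\dot\phi(T_0)\le a(T_0)M_\epsilon(T_0)+b(T_0)M_\epsilon^p(T_0-\tau)+c(T_0)<\dot M_\epsilon(T_0)$, which contradicts $\dot\phi(T_0)\ge\dot M_\epsilon(T_0)$. Hence $\phi<M_\epsilon$ on $[0,\mathcal T]$, and letting $\epsilon\to 0$ gives $\phi\le 1$ there; as $\mathcal T$ is arbitrary, $\|u(t)\|\le 1/\mu(t)$ on the whole interval of existence. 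For global existence I would note that, $\mu$ being continuous and positive, $1/\mu$ is bounded on every compact time interval, so the a priori bound $\|u(t)\|\le 1/\mu(t)$ excludes finite-time blow-up; together with the local existence in Assumption A this continues the solution to all of $[0,\infty)$, and the bound then holds for every $t\ge 0$. I expect the only delicate point to be the strict super-solution estimate and the attendant tie-breaking; the fact that $g=\|u\|$ need only be differentiated as a right derivative where $u$ vanishes (as noted after \eqref{eq5}) causes no trouble, since the comparison uses only one-sided derivatives.
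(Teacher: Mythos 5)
Your proof is correct, and it is worth noting that the paper itself gives no proof of Lemma \ref{ramm1}: it only remarks that the statement follows ``using arguments similar to the ones in \cite{R5}'', where the comparison of $g(t)=\|u(t)\|$ against $1/\mu(t)$ is run directly by a first-crossing argument. Your substitution $\phi=g\mu$, with the hypothesis \eqref{eqxx20} repackaged as $a(t)+b(t)+c(t)\le 0$ for $a=\gamma+\dot\mu/\mu$, $b(t)=\alpha(t)\mu(t)/\mu^{p}(t-\tau)\ge 0$, $c=\beta\mu\ge 0$ (whence $a\le 0$), is a faithful reformulation of that comparison, and you correctly identify the one genuine delicacy that a naive crossing argument against the constant super-solution $1$ faces: the hypothesis is non-strict, so at a contact time one gets only $\dot\phi\le\dot M$ and no contradiction. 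Your perturbed super-solution $M_\epsilon(t)=1+\epsilon e^{Lt}$ plays exactly the role of the $1/n$-perturbation the paper uses in its own comparison Lemma \ref{compare}, and the verification is sound: with $s=\epsilon e^{Lt}$, $a\le 0$, and $(1+s)^p\le 1+(p+1)s$ for small $s\ge 0$, one gets $aM_\epsilon(t)+bM_\epsilon^p(t-\tau)+c\le (a+b+c)+s\big(a+(p+1)b\big)<sL=\dot M_\epsilon(t)$ for $\epsilon$ small, where $b$ is bounded on $[0,\mathcal T]$ by continuity and positivity of $\mu$. The only (trivially fixable) slip is that you should insist $L>0$ in addition to $L>\sup_{[0,\mathcal T]}\big(a+(p+1)b\big)$, since the monotonicity $M_\epsilon(t-\tau)\le M_\epsilon(t)$ you invoke requires it; take $L=\max\bigl\{1,\;1+\sup_{[0,\mathcal T]}(a+(p+1)b)\bigr\}$. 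Two further points you handled correctly: at a first contact $\phi(T_0)=M_\epsilon(T_0)>1$ forces $u(T_0)\neq 0$, so $g$ is genuinely differentiable at $T_0$ and the right-derivative caveat after \eqref{eq5} is indeed moot; and your continuation argument from the a priori bound $\|u(t)\|\le 1/\mu(t)$ together with Assumption A is at the same level of rigor as the paper's own global-existence conclusions in Theorems \ref{thm1} and \ref{thm2}. What your route buys over the bare citation to \cite{R5} is a self-contained proof that also accommodates the forcing term (the summand $\beta\mu$), which the setting of \cite{R5} does not include.
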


As we have mentioned earlier, although results similar to Lemma \ref{ramm1} are quite general, their applications rely on the existence of $\mu(t)>0$ satisfying inequality \eqref{eqxx20}. Theoretically, Lemma \ref{ramm1} is applicable as long as the solution to equation \eqref{eq8} exists globally. If this is the case, then one can take $\mu(t)= \frac{1}{h(t)}$ and this function satisfies \eqref{eqxx20}. However, it is not known if there is an explicit formula for the solution $h(t)$ to equation \eqref{eq8}. 
Moreover, if $\alpha(t)$, $\beta(t)$, and $\gamma(t)$ are some general functions, it is not easy to find $\mu(t)$ which solves inequality \eqref{eqxx20}.



\begin{thebibliography}{99}

\bibitem{ref1} {\sc O. Aribo, M. Hbid, E. Ait Dads (Eds.)}, {\it Delay Equations and Applications}, in: NATO Science Series, vol. 206, 2006.

\bibitem{ref2} {\sc T. Faria}, Global attractivity in scalar delayed differential equations with applications to population models, {\it J. Math. Anal. Appl.}, {\textbf 289} (2004), no. 1, 35--54.

\bibitem{ref3} 
{\sc J. Hale and S. Verduyn Lunel}, {\it Introduction to Functional Differential Equations}, Springer-Verlag, Berlin, 1993.

\bibitem{hoang14}
{\sc N. S. Hoang}, Stability results of some abstract evolution equations, {\it Differ. Equ. Appl.}, {\textbf 6} (2014), no. 3, 417--428.
 
\bibitem{ref4} 
{\sc V. Kolmanovskii and A.D. Myshkis}, {\it Introduction to the Theory and Applications of Functional-Differential Equations}, Kluwer, Dordrecht, 1999.

\bibitem{ref5} 
{\sc A. G. Ramm}, Asymptotic stability of solutions to abstract differential equations, {\it J. Abstr. Differ. Equ. Appl.}, {\textbf 1} (2010), no. 1, 27--34.
\bibitem{ref6} 
{\sc A. G. Ramm}, Stability of solutions to some evolution problems, {\it Chaotic Modeling and Simulation (CMSIM)}, {\textbf 1} (2011), 17--27.

\bibitem{ref7} 
{\sc A. G. Ramm and N. S. Hoang}, {\it Dynamical Systems Method and Applications: Theoretical Developments and Numerical Examples}, Wiley, Hoboken, 2012. 

\bibitem{R4} 
{\sc A. G. Ramm}, A stability result for abstract evolution problems, {\it Math. Meth. Appl. Sci.}, {\textbf 36} (2012),  no. 4, 422-426. 

\bibitem{R5} 
{\sc A. G. Ramm}, Stability of solutions to abstract evolution equations with delay, {\it J. Math. Anal. Appl.}, {\textbf 396}
(2012), 523--527.


\end{thebibliography}
\end{document}